\documentclass[11pt,letter]{article}

\usepackage{amsmath}
\usepackage{amsthm}
\usepackage{amssymb}
\usepackage{verbatim}
\usepackage{bbm}
\usepackage{algorithm}
\usepackage[margin=1in]{geometry}

\newcommand{\bdisp}{\begin{displaystyle}}
\newcommand{\edisp}{\end{displaystyle}}

\renewcommand{\Pr}{\operatorname*{\mathbb{P}}}

\newcommand{\Exp}{\operatorname*{\mathbb{E}}}
\newcommand{\from}{\leftarrow}



\renewcommand{\vec}[1]{\boldsymbol{\mathbf{#1}}}

\newcommand{\Real}{\mathbb{R}}

\renewcommand{\d}{\mathrm{d}}


\newcommand{\eps}{\epsilon}

\newcommand{\muhat}{\hat{\mu}}
\newcommand{\vhat}{\hat{v}}
\newcommand{\alphahat}{\hat{\alpha}}

\newcounter{nTheorems}

\newtheorem{theorem}[nTheorems]{Theorem}

\newtheorem{lemma}[nTheorems]{Lemma}
\newtheorem{proposition}[nTheorems]{Proposition}

\newtheorem{example}{Example}
\newtheorem{fact}[nTheorems]{Fact}

\theoremstyle{definition}
\newtheorem{definition}[nTheorems]{Definition}

\title{Optimal Sub-Gaussian Mean Estimation in $\mathbb{R}$}
\author{Jasper C.H.~Lee\\ \ \\Brown University\\\texttt{jasperchlee@brown.edu} \and Paul Valiant\\ \ \\IAS \& Purdue University\\\texttt{pvaliant@gmail.com}}
\date{\today}

\begin{document}

\maketitle

\begin{abstract}
    We revisit the problem of estimating the mean of a real-valued distribution, presenting a novel estimator with sub-Gaussian convergence: intuitively, ``our estimator, on \emph{any} distribution, is as accurate as the sample mean is for the Gaussian distribution of matching variance." Crucially, in contrast to prior works, our estimator does not require prior knowledge of the variance, and works across the entire gamut of distributions with finite variance, including those without any higher moments.
    Parameterized by the sample size $n$, the failure probability $\delta$, and the variance $\sigma^2$, our estimator is accurate to within $\sigma\cdot(1+o(1))\sqrt{\frac{2\log\frac{1}{\delta}}{n}}$, tight up to the $1+o(1)$ factor. 
    Our estimator construction and analysis gives a framework generalizable to other problems, tightly analyzing a sum of dependent random variables by viewing the sum implicitly as a 2-parameter $\psi$-estimator, and constructing bounds using mathematical programming and duality techniques.
\end{abstract}

\section{Introduction}

We revisit one of the most fundamental problems in statistics: estimating the mean of a real-valued distribution, using as few independent samples from it as possible.
Our proposed estimator has convergence that is optimal not only in a big-O sense (i.e.~``up to multiplicative constants"), but tight to a $1+o(1)$ factor, under the minimal (and essentially necessary, see below) assumption of the finiteness of the variance.
Previous works, discussed further in Section~\ref{sec:works}, are either only big-O tight~\cite{Jerrum:1986,Nemirovsky:1983,Alon:1999}, or require additional strong assumptions such as the variance being known to the estimator~\cite{Catoni:2012} or assumptions that allow for accurate estimates of the variance, such as the kurtosis (fourth moment) being finite~\cite{Catoni:2012,Devroye:2016}.

\subsection{The Model and Main Result}
\label{sec:model}

Given a set of i.i.d.~samples from a real-valued distribution, the goal is to return, with extremely high probability, an accurate estimate of the distribution's mean. Specifically, given a sample set $X$ of size $n$ consisting of independent draws from a real-value distribution $D$, an $(\epsilon,\delta)$-estimator of the mean is a function $\hat{\mu}:\mathbb{R}^n\rightarrow\mathbb{R}$ such that, except with failure probability $\leq \delta$, the estimate $\hat{\mu}(X)$ is within $\epsilon$ of the true mean $\mu(D)$. Namely, \begin{equation}
\label{eq:PAC}
    \Pr(|\muhat(X) - \mu(D)| \le \eps) \ge 1-\delta 
\end{equation}

The goal is to find the optimal tradeoff between the sample size $n$, and the error parameters $\eps$ and $\delta$, for the distribution $D$. Fixing any two of the three parameters and minimizing the third yields essentially equivalent reformulations of the problem: we can fix $\epsilon,\delta$ and minimize the \emph{sample complexity} $n$; we can fix $\delta,n$ and minimize \emph{error} $\epsilon$; or we can fix $\epsilon,n$ and minimize the \emph{failure probability} $\delta$ (maximizing the \emph{robustness} $1-\delta$).


Perhaps the most standard and well-behaved setting for mean estimation is when the distribution $D$ is a Gaussian.
The sample mean (the empirical mean) is a provably optimal estimator in our sense when $D$ is Gaussian: for any $\epsilon,\delta>0$, the sample mean $\mu(X)$ is an $(\eps,\delta)$-estimator when given a sample set of size $n=(2+o(1))\frac{\sigma^2(D)\cdot\log\frac{1}{\delta}}{\eps^2}$ (all logarithms will be base $e$); and there is \emph{no} $(\eps,\delta)$- estimator for Gaussians if the constant 2 in the previous expression for the sample size is changed to any smaller number.

The main result of this paper is an estimator that performs as well, on \emph{any} distribution with finite variance, as the sample mean does on a Gaussian, without knowledge of the distribution or its variance:

\begin{theorem}\label{thm:main}
Estimator~\ref{alg:merged}, given $\delta,n>0$, defines a function $\muhat$ such that with probability at least $1-\delta$, given a sample set $X$ of size $n$, yields an estimate with error \[|\muhat(X)-\mu(D)| \le \sigma(D)\cdot(1+o(1))\sqrt{\frac{2\log\frac{1}{\delta}}{n}}\]
Here, the $o(1)$ term tends to 0 as  $\left(\frac{\log\frac{1}{\delta}}{n},\delta\right) \to (0,0)$.
Furthermore, as evidenced by the Gaussian case, there is no estimator which, under the same settings, produces an error that improves on our guarantees by more than a $1+o(1)$ multiplicative factor.
\end{theorem}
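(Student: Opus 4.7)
The plan is to realize the estimator as the simultaneous root of two empirical estimating equations, one identifying the mean and one auto-calibrating a scale parameter that substitutes for the unknown $\sigma(D)$. Concretely, I would introduce two influence functions $\psi_1,\psi_2:\Real\to\Real$ (with $\psi_1$ odd and monotone, $\psi_2$ even and nonnegative) and define $(\muhat,\alphahat)$ as the solution to
\[
F_1(\mu,\alpha):=\sum_{i=1}^n \psi_1\!\left(\frac{X_i-\mu}{\alpha}\right)=0,\qquad F_2(\mu,\alpha):=\sum_{i=1}^n \psi_2\!\left(\frac{X_i-\mu}{\alpha}\right)-cn=0,
\]
where $c$ is chosen so that $\Exp[F_2(\mu(D),\sigma(D))]=0$. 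Existence and uniqueness follow from monotonicity of $F_1$ in $\mu$ and of $F_2$ in $\alpha$, together with continuity. This is the 2-parameter $\psi$-estimator view promised in the abstract: the estimator is pinned down by a coupled pair of sums of bounded functions of the data.

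Next I would reduce the theorem to a one-sided tail bound. Writing $\eps=\sigma(1+o(1))\sqrt{2\log(1/\delta)/n}$, the event $\muhat>\mu(D)+\eps$ forces $F_1(\mu(D)+\eps,\alpha^\star)\ge 0$ for the scale $\alpha^\star$ lying on the curve $F_2=0$ at that mean. By an exchange argument plus controlling the joint sensitivity of $F_1,F_2$ (continuous-differentiability at the true parameters suffices), it is enough to bound, via Chernoff,
\[
\Pr\!\left(\sum_{i=1}^n \psi_1\!\left(\tfrac{X_i-(\mu(D)+\eps)}{\sigma(D)}\right)\ge 0\right)\le \inf_{\lambda\ge 0}\left(\Exp\,e^{\lambda\psi_1((X-\mu-\eps)/\sigma)}\right)^{n},
\]
and analogously for the lower tail and for $F_2$. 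Taylor-expanding $\psi_1$ at $0$ and using $\Exp[(X-\mu)/\sigma]=0$, $\Exp[((X-\mu)/\sigma)^2]=1$ should give a leading exponent of $-n\eps^2/(2\sigma^2)$, precisely the Gaussian Chernoff rate; matching this to $\log(1/\delta)$ yields the claimed error.

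The main obstacle is extracting the tight constant $2$ while assuming only finite variance: $\psi_1$ must be close to linear near $0$ (so the leading-order MGF term is $\frac12\lambda^2$, not larger), yet bounded/truncated enough that the MGF exists when only the second moment of $X$ is finite. This is the tension resolved by the mathematical-programming / duality framework sketched in the abstract: I would formulate "minimize the Chernoff exponent subject to moment constraints" as an LP over measures, and take its dual to obtain a closed-form optimal $\psi$ up to $o(1)$ corrections that vanish as $\log(1/\delta)/n\to 0$ and $\delta\to 0$. Getting the truncation level to scale correctly with $n,\delta$ — balancing the first-order truncation bias against the higher-moment tail loss — is the delicate calculation. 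Finally, the lower bound half of the theorem is immediate: specialize $D=\Normal(\mu,\sigma^2)$ and invoke the standard optimality of the sample mean on Gaussians (e.g., via the Neyman–Pearson / Le Cam two-point argument), so no separate estimator-independent lower bound is needed.
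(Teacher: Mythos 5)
Your outline matches the paper's high-level strategy---a 2-parameter $\psi$-estimator, Chernoff/MGF bounds on sums of independent influence terms, and an LP-over-measures duality step to extract the optimal constant---but it has a genuine gap at the single most important step: the claim that ``controlling the joint sensitivity of $F_1,F_2$ (continuous-differentiability at the true parameters suffices)'' lets you replace the data-dependent scale $\alpha^\star$ by $\sigma(D)$ and then bound one fixed Chernoff quantity. Under the finite-variance-only assumption the calibrated scale does \emph{not} concentrate around $\sigma(D)$: that would amount to estimating the variance, which requires a fourth-moment bound (this is exactly the gap between Catoni's setting and this one, and the paper stresses that its truncated empirical variance $\vhat$ can be far from $1$). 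Consequently the event $\{\muhat>\mu(D)+\eps\}$ can be realized at essentially any scale, and you must rule out a root of the system $(F_1,F_2)$ \emph{uniformly} over all $\alpha>0$. The paper does this by proving, for each $\vhat$ in a mesh covering $[0.05,55.5]$, that a $\vhat$-dependent linear combination $\vec{d}(\vhat)\cdot\vec{\psi}$ is bounded away from zero (a \emph{joint} Chernoff bound on the two equations, not two separate ones---the $d_\alpha$ component is what lets the $\psi_\alpha=0$ constraint subsidize the $\psi_\mu$ bound), then patches between mesh points with Lipschitz estimates and handles $\vhat$ outside $[0.05,55.5]$ by monotonicity. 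Bounding $F_1$ and $F_2$ ``analogously'' and separately at the true scale, as you propose, does not recover this.

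Second, your Taylor-expansion step---``using the first two moments should give a leading exponent of $-n\eps^2/(2\sigma^2)$''---is precisely where the finite-variance assumption bites. The cubic term in the MGF expansion is not negligible at the relevant tilt $\lambda\sim\sqrt{\log(1/\delta)/n}$ when only the second moment is controlled; the skewness is the reason the estimator carries the asymmetric $\frac{1}{3}\log\frac{1}{\delta}$ correction, and the paper's four-variable inequality (Lemma~\ref{lem:quadratic}, produced by the dual program and verified by an explicit factorization) is what certifies that the truncation kills the cubic term without costing more than a $1+o(1)$ factor in the exponent. You correctly flag this as ``the delicate calculation,'' but as written the proposal asserts the Gaussian rate at the point where it must be proved. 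The lower-bound half (specializing to Gaussians) is fine and is all the paper uses.
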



We have parameterized the above theorem in terms of fixing the sample size $n$ and the robustness parameter $\delta$ and asking for the minimum error $\eps$; however, because of the simple functional form of the bounds of Theorem~\ref{thm:main}, we can equivalently rephrase it as saying that, for any $\eps,\delta$, (a reparameterized) Algorithm~\ref{alg:merged} is an $(\eps,\delta)$ estimator using $(2+o(1))\frac{\sigma(D)^2}{\eps^2}\log\frac{1}{\delta}$ samples; or for any $n,\eps$, Algorithm~\ref{alg:merged} gives an estimate that is $\delta=\exp(-\frac{n\eps^2}{(2+o(1))\cdot\sigma^2(D)})$-robust. For each of these formulations, the performance is optimal up to the $1+o(1)$ factor, as evidenced by the well-known Gaussian case, as explained above.

We make the following observations regarding the main (minimal) assumption in the theorem, namely the finiteness of the variance of the unknown distribution.
First, imposing further assumptions about the finiteness of higher moments will not yield any improvements to the result, since matching lower bounds are provided by Gaussians, for which all moments are finite.
Second, as shown by Devroye et al.~\cite{Devroye:2016}, relaxing the finite variance assumption by only assuming, say, the finiteness of the $(1+\beta)^\text{th}$ moment for some $\beta<1$ will yield strictly worse sample complexity.
In particular, the sample complexity will have an $\eps$-dependence that is $\omega(1/\eps^2)$.
Thus, our result shows that mean estimation can be performed at a sub-Gaussian rate, with the optimal multiplicative constant of $2$ in the sample complexity, if and only if the variance of the underlying distribution is finite.

We also contrast with previous works that attain optimal sub-Gaussian convergence but make additional assumptions such as the finiteness of the kurtosis ($4^\textrm{th}$ moment)~\cite{Catoni:2012,Devroye:2016}.
The gap in assumptions between those works and this work is not only theoretical, but also of practical consequence: power law distributions are known to be good models of certain real-world phenomena, and for exponents in the range $(3,5]$, the variance exists, but not the kurtosis.


\subsection{Our Approach}

We briefly describe the main features of our estimator, as a setting for what follows, and to distinguish it from prior work. At the highest level: in order to return a $\delta$-robust estimate of the mean, our estimator ``throws out the $\frac{1}{3}\log\frac{1}{\delta}$ most extreme points in the samples", and returns the mean of what remains. More specifically, outliers are thrown out in a \emph{weighted} manner, where we throw out a \emph{fraction} of each data point, with the fraction proportional to the square of its distance from a median-of-means initial guess for the mean, where the fraction is capped at 1, and the proportionality constant is chosen so that the total weight thrown out equals exactly $\frac{1}{3}\log\frac{1}{\delta}$.
See Estimator~\ref{alg:merged} for full details, but we stress here that the estimator is simple to implement---it may be computed in linear time---and therefore applicable in practice.

The above description is rather different from the typical M-estimator/$\psi$-estimator approach of Catoni~\cite{Catoni:2012} and other works in this area.
However, as we see in Section~\ref{sec:estimator}, our estimator can be reinterpreted as a 2-parameter $\psi$-estimator, and the proof of our main result will crucially rely on this reformulation.

\subsection{Motivation: $3^\textrm{rd}$-order corrections of the empirical mean}

Perhaps the most non-obvious part of our estimator is throwing out exactly $\frac{1}{3}\log\frac{1}{\delta}$ many samples.
We motivate this quantity in this section, by considering the special case of estimating the mean of asymmetric---very biased---Bernoulli distributions, which is in some sense an extremal case for our setting.

\begin{example}
Consider the mean estimation problem, given $n$ samples from a Bernoulli distribution supported on 0 and 1, where the probability of drawing 1 equals some parameter $p$. Thus the number of 1s observed is distributed as the Binomial distribution $Bin(n,p)$, of mean $np$ and variance $np(1-p)$. The interesting regime for us is when $p$ is very small, and thus $1-p\approx 1$, and the Binomial distribution is essentially the Poisson distribution $Poi(np)$ of mean and variance $\lambda=np$. In this setting, the mean estimation problem becomes: given a sample $k$ from $Poi(np)$, and the parameters $n$ and $\delta$, return an estimate that, except with failure probability $\delta$, is as close as possible to $p$ (or equivalently $np$). Given a Poisson sample $k\from Poi(np)$, returning simply $k$ is a natural estimate of $np$; however, since Poisson distributions are slightly skewed, it turns out that one should instead return the correction $k-\frac{1}{3}\log\frac{1}{\delta}$. 

Explicitly, the Poisson distribution has pmf $poi(\lambda,k)=\frac{\lambda^k e^{-\lambda}}{k!}$, whose logarithm, using Stirling's approximation for the factorial, expanding to $3^\textrm{rd}$ order in $k$, and dropping lower-order terms in $\lambda$ is $-\frac{(k-\lambda)^2}{2\lambda}+\frac{(k-\lambda)^3}{6\lambda}$. The $2^\textrm{nd}$-order term here corresponds to a Gaussian centered at $k=\lambda$ of variance $\lambda$, which is a standard approximation for the Poisson distribution. However, crucially, the $3^\textrm{rd}$ order term, corresponding to the positive skewness of the Poisson distribution, increases the pmf to the right of $k=\lambda$ and decreases it by an essentially symmetric factor to the left. 

Seeking a $\delta$-robust estimation of $\lambda$ from a single sample of $k$, we are concerned, essentially, with the interval where the Poisson pmf is greater than $\delta$, or equivalently, where the log pmf is greater than $\log\delta$. The quadratic $-\frac{(k-\lambda)^2}{2\lambda}$ in the first term of the above approximation equals $\log\delta$ when $k=\lambda\pm \sqrt{2\lambda \log\frac{1}{\delta}}$, and this interval is centered at the $\lambda$. However, crucially, when we take into account the $3^\textrm{rd}$-order term, the interval where $poi(\lambda,k)\geq\delta$ essentially shifts to become $k=\frac{1}{3}\log\frac{1}{\delta}+\lambda\pm \sqrt{2\lambda \log\frac{1}{\delta}}$.
Thus, given a single sample, one can $\delta$-robustly estimate the mean of a Poisson distribution similarly well as the Gaussian of same mean and variance, but only if one returns the sample minus $\frac{1}{3}\log\frac{1}{\delta}$.
\end{example}

Thus, the $\frac{1}{3}\log\frac{1}{\delta}$ term in our estimator arises essentially from a $3^\textrm{rd}$ order correction to the sample mean, at least in the special case of Bernoulli distributions.
For additional intuition and motivation about the ``$3^\textrm{rd}$ order correction" in our estimator, please refer to Appendix~\ref{app:3rd}.

\subsection{Key Contributions in Our Construction and Analysis}\label{sec:contributions}

In addition to settling the fundamental sample complexity question of mean estimation, we point out that the estimator construction and analysis may also be of independent interest. In particular, the analysis framework---as described below---is generalizable to other problem settings and estimator constructions.

Our overall analysis framework may be viewed as a Chernoff bound---showing exponentially small probability of estimation error via bounds on a moment generation function (expectation of an exponentiated real-valued random variable). However, since we seek to analyze our estimator to sub-constant accuracy, many standard approaches fail to yield the required resolution. We point out three crucial components of our approach.

First, our estimator (Estimator~\ref{alg:merged}) is \emph{not} a sum of independent terms, which is fundamental to standard Chernoff bound approaches, and thus we instead reformulate our estimator as a 2-parameter $\psi$-estimator (see Definition~\ref{def:psi}).
This technique rewrites our estimate $\hat{\mu}$ as the first coordinate of the root $(\muhat,\alphahat)$ of a system of 2 equations $\psi_\mu(\muhat, \alphahat) = 0$ and $\psi_\alpha(\muhat, \alphahat) = 0$, where the functions $\psi_\mu(\muhat, \alphahat) = \sum_i \psi_\mu(x_i,\muhat,\alphahat)$ and $\psi_\alpha(\muhat,\alphahat) = \sum_i \psi_\alpha(x_i,\muhat,\alphahat)$ are explicitly sums of a corresponding function applied to each of the $n$ independent data points in the sample set.
Thus we have bought independence at the price of making the estimator an implicit function, introducing two new variables.
One-dimensional estimators of this form are standard: for example, Catoni's~\cite{Catoni:2012} mean estimator in the case of known variance is a (1 parameter) $\psi$-estimator for which he proves finite sample concentration. 
However, adding another dimension---$\alphahat$, a new implicit variable whose value the estimator will ultimately discard---is less standard, without standard analysis techniques, yet significantly increases the expressive power of such estimators~\cite{stefanski}.
Our high-level approach is to find carefully chosen linear combinations of the functions $\psi_\mu$ and $\psi_\alpha$, each of which is now a sum of independent terms, and prove Chernoff bounds about these linear combinations.




Second, even after identifying these linear combinations of $\psi$ functions, the corresponding Chernoff bound analysis is difficult to directly tackle.
The Chernoff bound analysis, as it turns out, is essentially equivalent to bounding a max-min optimization problem where the maximization is over the set of real-valued probability measures with mean 0 and variance 1.
In other words, the max-min optimization problem can be interpreted as having uncountably infinitely many variables.
In order to drastically simplify the problem and make it amenable to analysis, we use convex-concave programming and linear programming duality techniques to reduce the problem to a pure minimization problem with a small finite number of variables, which we can analyze tightly.


We believe that the above two ideas---1) reformulating an estimator as a multi-parameter $\psi$-estimator, so as to find a proxy of the estimator that is a sum of independent variables, and 2) viewing the corresponding Chernoff bound analysis as an optimization problems and applying relevant duality techniques---form a general analysis framework which expands the space of possible estimators that are amenable to \emph{tight} analysis.



\section{Related Work}
\label{sec:works}

There is a long history of work on real-valued mean estimation in a variety of models.
In the problem setting we adopt, where the sole assumption is on the finiteness of the second moment, the median-of-means algorithm~\cite{Jerrum:1986,Nemirovsky:1983,Alon:1999} has long been known to have sample complexity tight to within constant multiplicative factors, albeit with a sub-optimal constant.
Catoni~\cite{Catoni:2012} improved this sample complexity to essentially optimal (tight up to a $1+o(1)$ factor), by focusing on the special cases where the variance of the underlying distribution is known 
or the $4^\text{th}$ moment is finite and bounded (in which case the second moment can be accurately estimated). 
We stress however that the finiteness of the $4^\textrm{th}$ moment is nonetheless a much stronger assumption than our minimal assumption on the finiteness of the variance (see the discussion at the end of Section~\ref{sec:model}).

Moving beyond the original problem formulation, Devroye et al.~\cite{Devroye:2016} drew the distinction between a \emph{single-$\delta$ estimator},  which takes in the robustness parameter $\delta$ as input, versus a \emph{multiple-$\delta$ estimator}, which does not take any $\delta$ as input, but still provides guarantees across a wide range of $\delta$ values.
In their work, making the same finite kurtosis assumption as Catoni, they achieved a multiple-$\delta$ estimator with essentially optimal sample complexity, for a wide range of $\delta$ values.
It is thus natural and prudent to ask whether a multiple-$\delta$ estimator can exist for the entire class of distributions with finite variance, for a meaningful range of $\delta$ values.
Unfortunately, Devroye et al.~\cite{Devroye:2016} showed strong lower bounds answering the question in the negative.
Hence, in this work, our proposed estimator is (and must be) a single-$\delta$ estimator, taking $\delta$ as input.

Many applications have arisen from the success of sub-Gaussian mean estimation, showing how to leverage or extend Catoni-style estimators to new settings, achieving sub-Gaussian performance on problems such as regression, empirical risk minimization, and online learning (bandit settings): for example see~\cite{Minsker:2018,Brownlees:2015,Catoni:2017,Bubeck:2013}.

A separate but closely related line of work is on \emph{high dimensional} mean estimation.
While estimators generalizing the ``median-of-means" construction were found to have statistical convergence tight to multiplicative constants, until recently, such estimators took super-polynomial time to compute~\cite{lugosi2019sub}.
A recent line of work~\cite{Hopkins:2020,Cherapanamjeri:2019,Lei:2020}, started by Hopkins~\cite{Hopkins:2020}, thus focuses on the computational aspect, and brought the computation time first down to polynomial time, with subsequent work bringing it further down to quadratic time using spectral methods.

A recent comprehensive survey by Lugosi and Mendelson~\cite{Lugosi:2019} explains much of the above works in greater detail.

Other works have focused on mean estimation in restrictive settings, for example, with differential privacy constraints.
For example, Kamath et al.~\cite{Kamath:2020} studied the differentially private mean estimation problem in the constant probability regime, and showed strong sample complexity separations from our unrestricted setting.
Duchi, Jordan and Wainright~\cite{Duchi:2013,Duchi:2018} also studied the problem under the stricter constraint of \emph{local} differential privacy.
See the work of Kamath et al.~\cite{Kamath:2020} for a more comprehensive literature review on differentially private mean estimation.


Part of our tight analysis relies on insights from mathematical programming and duality; see~\cite{polyanskiy2019dualizing} for a detailed discussion of prior works that use such mathematical programming and duality tools to either design or analyze statistical estimators~\cite{Polyanskiy:2017,Moitra:2013,Wu:2016,Wu:2019,Jiao:2015,Valiant:2011}.




\section{Our Estimator}
\label{sec:estimator}

In this section, we present our estimator (Estimator~\ref{alg:merged}), 
as well as its reformulation as a 2-parameter $\psi$-estimator. We then present some perspective and basic structural properties of the estimator that will serve as a foundation for the analysis to follow.

\begin{algorithm}
\floatname{algorithm}{Estimator}
\caption{The Main Estimator}
\label{alg:merged}
\vspace*{2mm}
\quad Inputs: \begin{itemize}
    \item $n$ independent samples $\{x_i\}$ from the unknown underlying distribution $D$ (guaranteed to have finite variance)
    \item Confidence parameter $\delta$
                \end{itemize}
\begin{enumerate}
    \item Compute the median-of-means estimate $\kappa$: evenly partition the data into $\log\frac{1}{\delta}$ groups and let $\kappa$ be the median of the set of means of the groups.
    \item Find the solution $\alpha$ to the monotonic, piecewise-linear equation $\sum_i \min(\alpha (x_i - \kappa)^2, 1) = \frac{1}{3}\log\frac{1}{\delta}$
    \item Output: $\muhat = \kappa + \frac{1}{n}\sum_i (x_i-\kappa)(1-\min(\alpha(x_i-\kappa)^2,1))$
\end{enumerate}
\end{algorithm}

\subsection{Meaning of the Estimator}

Consider the expression in Step 3 for the final returned value of the estimator, $\muhat = \kappa + \frac{1}{n}\sum_i (x_i-\kappa)(1-\min(\alpha(x_i-\kappa)^2,1))$. Without the final $\min$ expression, the expression $\kappa + \frac{1}{n}\sum_i (x_i-\kappa)\cdot 1$ computes exactly the sample mean. The factor $(1-\min(\alpha(x_i-\kappa)^2,1))$ may be thought of as a weight on the $i^{\textrm{th}}$ element, between 0 and 1, where a weight of 1 leaves that element as is, but a weight towards 0 essentially throws out part of the sample $x_i$ and instead defaults to the median-of-means estimate $\kappa$. Thus, rather than either keeping or discarding each entry, the weight $\min(\alpha(x_i-\kappa)^2,1)$ specifies what \emph{fraction} of the $i^{\textrm{th}}$ sample to discard.

The condition in Step 2 of Estimator~\ref{alg:merged} picks $\alpha$ so that the total, weighted, number of discarded samples equals $\frac{1}{3}\log\frac{1}{\delta}$. The expression $\min(\alpha(x_i-\kappa)^2,1)$ specifying what fraction of each $x_i$ to discard says, essentially, that this fraction should be proportional to the square of the deviation of $x_i$ from the mean estimate $\kappa$, capped at 1 so that we do not discard ``more than 100\% of" any sample $x_i$.

\subsection{Structural Properties of the Estimator}
\label{sec:basic}

We point out three basic structural properties of Estimator~\ref{alg:merged} that both shed light on the estimator itself, and will be crucial to its analysis.
We formally state and prove these properties in Appendix~\ref{app:basic}.

First, the estimator is ``affine invariant" in the sense that, if its input samples $\{x_i\}$ undergo an affine map $x\rightarrow ax+b$ then its output will be mapped correspondingly.
Second, as is well known, the median-of-means estimate $\kappa$ of Step 1, while not as accurate as what we will eventually return, is robust in the sense that, with probability at least $1-\delta/2$, the median-of-means estimate has additive error from the true mean that is at most $O(\sigma\cdot\sqrt{\frac{\log\frac{1}{\delta}}{n}})$---proportional to the eventual guarantees of our estimator, but with somewhat worse proportionality constant.
Third, if we temporarily ignore Step 1, treating $\kappa$ as a free parameter, we show that the final output of the algorithm, $\muhat$, varies very little with $\kappa$.
Combined with the accuracy guarantees of the median-of-means estimate, the difference in the final estimate between using the median-of-means as $\kappa$ versus using the \emph{true} mean as $\kappa$ is inconsequential (a $o(1)$ factor) compared to the total additive error we aim for.
Therefore, for the purposes of \emph{analysis}, it suffices to assume that $\kappa$ takes the value of the true mean (though an algorithm could not do this in practice, as the true mean is unknown).

These structural properties allow us to drastically simplify the analysis: the affine invariance means it is sufficient to show our estimator works for the special case of distributions with mean 0 and variance 1; the second and third properties mean that errors in $\kappa$ effectively do not matter, and, for distributions with mean 0, it is sufficient to omit Step 1 and instead just analyze the case where $\kappa=0$.

We point out that Estimator~\ref{alg:merged} when modified to set $\kappa=0$ (independently of the samples) is \emph{no longer} affine invariant, nor is its reformulation as a $\psi$-estimator in Section~\ref{sec:psi-formulation}.
The structural properties in this section show that, instead of analyzing the actual estimator (Estimator~\ref{alg:merged} which is affine invariant), it suffices to analyze this artificially simplified, although no longer affine invariant, estimator which sets $\kappa = 0$, on distributions with mean 0 and variance 1. Explicitly, in the rest of the paper we will show Proposition~\ref{prop:main} (Section~\ref{sec:analysis}), which analyzes the mean-0 variance-1 case of the $\psi$-estimator defined below in Definition~\ref{def:psi}; the discussion of this section---made formal in Appendix~\ref{app:basic}---shows that this proposition implies our main result, Theorem~\ref{thm:main}.

\subsection{Representing a Special Case of Estimator~\ref{alg:merged} as a $\psi$-Estimator}
\label{sec:psi-formulation}
As discussed in Section~\ref{sec:contributions}, our estimator, even its simplified version with $\kappa = 0$, is not a sum of independent terms, making it difficult to tightly bound its moment generating function, and hence also difficult to prove its concentration around the true mean using a Chernoff-style bound.
Our solution is to reformulate Estimator~\ref{alg:merged}, with the simplifying assumption that $\kappa = 0$, as a 2-parameter $\psi$-estimator, as defined in Definition~\ref{def:psi}.
This reformulation defines our estimate $\muhat$ implicitly in terms of two new functions $\psi_\mu$ and $\psi_\alpha$ that are indeed sums of $n$ independent terms, each term depending on a single $x_i$.
We will use this representation crucially for the concentration analysis of the estimator.

\begin{definition}
\label{def:psi}
Consider Estimator~\ref{alg:merged} but with Step 1 replaced with ``$\kappa = 0$".
The estimator can be equivalently expressed as follows:
\begin{enumerate}
    \item Input: $n$ independent samples $X = x_1,\ldots,x_n$
    \item Solve for the (unique) pair $(\muhat,\alphahat)$ satisfying $\psi_\mu = 0$ and $\psi_\alpha = 0$, where the functions are defined as follows:
    \begin{align*}
        \psi_\mu(X,\muhat,\alphahat) &= \sum_{i=1}^n \left(\muhat - x_i\left(1-\min\left(\alphahat x_i^2,1\right)\right)\right)\\
        \psi_\alpha(X,\muhat,\alphahat) &= \sum_{i=1}^n \left(\min\left(\alphahat x_i^2,1\right) - \frac{1}{3n}\log\frac{1}{\delta} \right)
    \end{align*}
    (Note that $\alphahat > 0$ always)
    \item Output: $\muhat$ from the previous step
\end{enumerate}
We will sometimes omit $\muhat$ from the arguments of $\psi_\alpha$ since $\muhat$ is not used in the definition of the function. We will often refer to the pair $(\psi_\mu,\psi_\alpha)$ as a 2-element vector $\vec{\psi}$.

For convenience in the rest of the paper, we define $\vhat\equiv\frac{\log(1/\delta)}{3n\alphahat}$, which we refer to as the ``truncated empirical variance"; this is because, if we modify the $\psi_\alpha=0$ condition by removing the ``truncation" of taking the min with 1, then the resulting condition, when expressed in terms of $\vhat=\frac{\log(1/\delta)}{3n\alphahat}$ and rearranged, is exactly the condition that $\vhat$ is the empirical variance: $\frac{1}{n}\sum_{i=1}^n x_i^2$. Thus $\alphahat$ may be thought of as a proxy for the empirical variance, as $\vhat=\frac{\log(1/\delta)}{3n\alphahat}$ equals the empirical variance, except in cases when samples are far enough from 0 that they are ``truncated" by the ``$\min$".
\end{definition}

Interestingly, in the case that none of the samples are ``truncated", (and $\kappa=0$), the overall output of the estimator becomes $\frac{1}{n}\sum_i x_i-\alpha x_i^3=\frac{1}{n}\sum_i x_i-  \frac{\log(1/\delta)}{3n\vhat}x_i^3$, namely, $\muhat$ is ``the empirical mean, corrected by subtracting $\frac{1}{3n}\log\frac{1}{\delta}$ times the ratio of the empirical $3^\textrm{rd}$ moment over the empirical $2^\textrm{nd}$ moment."

\begin{proof}[Proof that Definition~\ref{def:psi} is equivalent to Estimator~\ref{alg:merged} when $\kappa$ is set to 0]
Fix a set of samples $X = \{x_i\}$.
We observe that Estimator~\ref{alg:merged}, with the additional simplifying assumption that $\kappa = 0$, can be represented by the following 2 equations.
\begin{equation}
\label{eq:simpleest}
\begin{aligned}
    &\sum_i \min(\alpha \, x_i^2, 1) = \frac{1}{3}\log\frac{1}{\delta}\\
    &\muhat = \frac{1}{n}\sum_i x_i(1-\min(\alpha\,x_i^2,1))
\end{aligned}
\end{equation}
Estimator~\ref{alg:merged} solves for $\alpha$ in the first line, and uses this $\alpha$ value to compute the estimate $\muhat$ in the second line. The two conditions of Equation~\ref{eq:simpleest} are equivalent to the two conditions $\psi_\alpha=0$, $\psi_\mu=0$ respectively, and thus the two estimators are equivalent.
\end{proof}

\section{Analyzing our estimator}
\label{sec:analysis}

In this section, we outline the proof of our main theorem, restated as follows.

\vspace*{3mm}\noindent\textbf{Theorem~\ref{thm:main}.}
\emph{Estimator~\ref{alg:merged}, given $\delta,n>0$,  and a sample set $X$ of $n$ independent samples from distribution $D$, will, with probability at least $1-\delta$ over the sampling process, yield an estimate $\muhat$ with error at most $|\muhat(X)-\mu(D)| \le \sigma(D)\cdot(1+o(1))\sqrt{\frac{2\log\frac{1}{\delta}}{n}}$.
Here, the $o(1)$ term tends to 0 as  $\left(\frac{\log\frac{1}{\delta}}{n},\delta\right) \to (0,0)$.
}
\medskip

The discussion of the structural properties of Estimator~\ref{alg:merged} in Section~\ref{sec:basic} shows that it is sufficient to instead show that, for any distribution of mean 0 and variance 1, the $\psi$-estimator of Definition~\ref{def:psi} will return an estimate $\muhat$ that is close to 0, except with tiny probability.
(See Appendix~\ref{app:basic} for the formal statements of the claims of Section~\ref{sec:basic}.) Recall also that, since the $\psi$-estimator solves for $(\muhat,\alphahat)$ such that  $\vec{\psi}(X,\muhat,\alphahat) = 0$ (where $X$ is the sample set) and returns $\muhat$, the claim that the returned estimate will be close to 0 is equivalent to saying that \emph{every} $(\muhat,\alphahat)$ pair with $\muhat$ \emph{far} from 0 must violate the equation, namely  $\vec{\psi}(X,\muhat,\alphahat) \neq 0$.
We thus prove the following proposition (Proposition~\ref{prop:main}), to yield Theorem~\ref{thm:main}.
Note that the failure probability in Proposition~\ref{prop:main} is $\delta/2$ (instead of $\delta$, as in Theorem~\ref{thm:main}), accounting for an additional $\delta/2$ probability that the median-of-means estimate in Step 1 of Estimator~\ref{alg:merged} fails. 

\begin{proposition}
\label{prop:main}
There exists a universal constant $c > 0$ such that, fixing $\eps' = \left(1+\frac{c\log\log\frac{1}{\delta}}{\log\frac{1}{\delta}}\right)\sqrt{\frac{2\log\frac{1}{\delta}}{n}}$, we have that for all distributions $D$ with mean 0 and variance 1, with probability at least $1-\frac{\delta}{2}$ over the set of samples $X$, for all $\muhat, \alphahat$ where $|\muhat| > \eps'$ and $\alphahat>0$, the vector $\vec{\psi}(X,\muhat,\alphahat)\neq 0$.
\end{proposition}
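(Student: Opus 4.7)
By the structural properties of Section~\ref{sec:basic}, it suffices to work with the $\psi$-estimator of Definition~\ref{def:psi} on distributions $D$ of mean 0 and variance 1. Both this class of $D$ and the $\psi$-estimator are invariant under $x \mapsto -x$ (which sends $\muhat \to -\muhat$ while leaving $\alphahat$ fixed), so by symmetry it is enough to bound, by $\delta/4$, the probability that some $(\muhat, \alphahat)$ with $\muhat > \eps'$ and $\alphahat > 0$ satisfies $\vec{\psi}(X, \muhat, \alphahat) = 0$. To refute $\vec{\psi} = 0$ at such a point, it suffices to exhibit coefficients $(a,b) \in \Real^2$ with $a\psi_\mu + b\psi_\alpha > 0$, which I will pick as functions $a(\muhat, \alphahat), b(\muhat, \alphahat)$ tuned to the point in question.

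The crucial gain from this reformulation is that $L := a \psi_\mu + b \psi_\alpha$ is a sum of $n$ i.i.d.\ terms $g(x_i; \muhat, \alphahat, a, b)$. A standard Chernoff bound then yields $\Pr[L \leq 0] \leq \inf_{t > 0} \left(\Exp_D[e^{-t g(x)}]\right)^n$, and the plan is to select $(a, b, t)$ as functions of $(\muhat, \alphahat)$ so that this bound falls below a polynomially small multiple of $\delta$. The central object is therefore the single-sample MGF $\Exp_D[e^{-t g(x)}]$, which must be controlled \emph{uniformly} over every $D$ with $\Exp_D[1]=1$, $\Exp_D[x]=0$, $\Exp_D[x^2]=1$.

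This worst-case MGF is the supremum of a linear functional over probability measures subject to three linear (moment) constraints. Strong LP duality converts it into a minimization over three Lagrange multipliers, equivalently an infimum over quadratic polynomials $q$ with prescribed moment-pairing coefficients such that $q(x) \geq e^{-t g(x)}$ pointwise on $\Real$. This reduces the infinite-dimensional maximization to a tractable finite-dimensional optimization. Optimizing $(a, b, t, q)$ so that the resulting per-sample bound, raised to the $n^{\text{th}}$ power, becomes $\exp(-(1+o(1))\log\frac{1}{\delta})$ is the technical heart of the argument, and is where the ``$\frac{1}{3}\log\frac{1}{\delta}$ truncation'' and the precise form of the $\psi_\alpha$ constraint pay off.

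Finally, this pointwise concentration must be upgraded to a uniform statement over the continuous region $\{(\muhat, \alphahat) : \muhat > \eps', \alphahat > 0\}$. I will handle this by placing a net on parameter space and interpolating using the monotonicity and piecewise-linearity of $\vec{\psi}$ in $(\muhat, \alphahat)$; a polynomial-sized net suffices and the resulting union bound is absorbed by the $o(1)$ correction. The main obstacle is making the leading constant \emph{precisely} $\sqrt{2}$: every source of slack---the Chernoff relaxation, the choice of quadratic majorant in the dual, the net discretization, the median-of-means reduction from Section~\ref{sec:basic}, and the effect of the $\min(\alphahat x^2, 1)$ truncation---must be squeezed into the allotted $\frac{c \log\log\frac{1}{\delta}}{\log\frac{1}{\delta}}$ correction. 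This forces nearly optimal choices of $(a, b, t)$ as functions of $(\muhat, \alphahat)$, and a careful asymptotic expansion of the dual optimum in the regime $\frac{\log(1/\delta)}{n} \to 0$, $\delta \to 0$.
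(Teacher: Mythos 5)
Your proposal follows essentially the same route as the paper: the symmetry reduction to $\muhat > \eps'$, the linearization of ``$\vec{\psi}\neq 0$'' into exhibiting a direction $(a,b)=(d_\mu,d_\alpha)$ with positive dot product, a Chernoff bound on the resulting sum of i.i.d.\ terms whose worst-case MGF over mean-0 variance-1 distributions is controlled by minimax/LP duality against pointwise quadratic majorants, and a mesh-plus-Lipschitz/monotonicity argument to make the bound uniform with the union bound absorbed into the $\frac{c\log\log\frac{1}{\delta}}{\log\frac{1}{\delta}}$ slack. The one place your sketch is lighter than the paper is the extension from the bounded mesh to all of $\{\muhat>\eps',\alphahat>0\}$, which in the paper requires carefully chosen boundary directions $\vec{d}$ at $\vhat=0.05$ and $\vhat=55.5$ (with $d_\mu\ge 0$ throughout) to drive the monotonicity arguments for unbounded $\alphahat$ and $\muhat$---but the underlying idea is the same.
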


Proposition~\ref{prop:main} asks us to show that, with high probability,  $\vec{\psi}(X,\muhat,\alphahat)$ is not at the origin for \emph{any} choice of $|\muhat|>\eps',\alphahat$; instead, as a proof strategy, we choose a finite bounded mesh of $\muhat,\alphahat$ and show that the function $\vec{\psi}(X,\muhat,\alphahat)$ is 1) not just nonzero, but far from the origin on this set, 2) Lipschitz in between mesh elements, and 3) monotonic (in an appropriate sense) outside the mesh bounds.
Step 1), discussed below, contains the most noteworthy part of the proof, a mathematical programming-inspired bound to help complete a delicate Chernoff bound argument.

For simplicity, we reparameterize to work with $\vhat\equiv\frac{\log(1/\delta)}{3n\alphahat}$ (the ``truncated empirical variance") instead of $\alphahat$: the mesh we analyze, covering the most delicate region for analysis, will span the interval $\vhat\in[0.05,55.5]$, namely, where the truncated empirical variance $\vhat$ is within a constant factor of the true variance of 1. Note that this should \emph{not} be taken to imply that $\vhat\in[0.05,55.5]$ with high probability---the truncated empirical variance is not designed to be a good estimate of the variance, merely as a step in robustly estimating the mean; and further, accurate estimates of the variance are simply impossible in general without further assumptions such as bounds on the distribution's $3^\textrm{rd}$ or $4^\textrm{th}$ moments.
We also want to distinguish our estimator from Catoni's~\cite{Catoni:2012}: Catoni's estimator relies on having a high-precision estimate of the variance (to within a $1+o(1)$ factor) in order to achieve the desired performance.
By contrast, our estimator is robust against wild inaccuracies of the (truncated) empirical variance $\vhat$ compared to the true variance of 1. In short, the approach of our estimator should be viewed as distinct from Catoni's, since, while Catoni's estimator relies on an initial good guess at the variance, ours thrives in the inevitable situations where $\vhat$ is far from 1.

We return to describing our strategy for analyzing the performance of our estimator.
For each $\muhat,\vhat=\frac{\log(1/\delta)}{3n\alphahat}$ that we analyze (from the finite mesh): instead of directly showing that, with $\geq 1-\frac{\delta}{2}$ probability, $\vec{\psi}(X,\muhat,\alphahat)$ is far from the origin in some direction, we instead \emph{linearize} this claim; we prove the stronger claim that there exists a specific direction $\vec{d}(\vhat)$ such that with $\geq 1-\frac{\delta}{2}$ probability, $\vec{\psi}(X,\muhat,\alphahat)$ is more than  $\frac{1}{\log(1/\delta)}$ distance from the origin in direction $\vec{d}$ (specifically we lower bound the dot product $\vec{d}(\vhat)\cdot\vec{\psi}(\muhat,\alphahat)$, while we upper bound each coordinate of $\vec{d}$ inversely with the Lipschitz coefficients of $\vec{\psi}$).
The crucial advantage of this reformulation is that, since each of $\psi_\mu,\psi_\alpha$ is a sum of $n$ terms, that are each a function of an independent sample $x_i$ from $D$, the  dot product $\vec{d}(\vhat)\cdot\vec{\psi}(X,\muhat,\alphahat)$ is thus also a sum of $n$ independent terms, and thus we finish the proof with a Chernoff bound, Lemma~\ref{lem:Chernoff}.
The Chernoff bound argument itself is standard; however, to bound the resulting expression requires an extremely delicate analysis that we pull out into a separate 4-variable inequality expressed as Lemma~\ref{lem:quadratic}---see the discussion around the lemma for more details and for motivation of the analysis from a mathematical programming perspective.



We state the crucial Chernoff bound (Lemma~\ref{lem:Chernoff}) and the Lipschitz bounds (Lemma~\ref{lem:vhat}), and then use them to prove Proposition~\ref{prop:main}.
We prove Lemmas~\ref{lem:Chernoff} and~\ref{lem:vhat} in the next section, along with the statement and proof of the delicate component that is Lemma~\ref{lem:quadratic}.

\begin{lemma}
\label{lem:Chernoff}
Consider an arbitrary distribution $D$ with mean 0 and variance 1.
There exists a universal constant $c$ where the following claim is true.
Fixing $\muhat = \eps'=\left(1+\frac{c\log\log \frac{1}{\delta}}{\log\frac{1}{\delta}}\right)\sqrt{\frac{2\log\frac{1}{\delta}}{n}}$, then for all $\delta$ smaller than some universal constant, and for all $\vhat \in [0.05, 55.5]$, there exists a vector $\vec{d}(\vhat)$ where $d_\mu\geq 0$, and both $\sqrt{\frac{n}{\log(1/\delta)}}|d_\mu|,|d_\alpha|$ are bounded by a universal constant, such that 
\begin{equation*}
\Pr_{X \from D^n}\left(\vec{d}(\vhat)\cdot \vec{\psi}\left(X, \muhat = \eps', \alphahat=\frac{\log(1/\delta)}{3n\vhat}\right) > \frac{1}{\log\frac{1}{\delta}}\right) \ge 1-\frac{\delta}{\log^4\frac{1}{\delta}}\end{equation*}
Furthermore, for $\vhat=0.05$ we have $d_\mu=\sqrt{3.75\frac{\log(1/\delta)}{n}}$, $d_\alpha=\sqrt{3}$; and for $\vhat=55.5$ we have $d_\mu=0$, $d_\alpha<0$.
\end{lemma}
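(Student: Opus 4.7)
The plan is to execute a Chernoff-style bound on the scalar random variable $\vec{d}(\vhat) \cdot \vec{\psi}(X, \muhat, \alphahat) = \sum_{i=1}^n Y_i$, where $Y_i = d_\mu\,\psi_\mu(x_i, \muhat, \alphahat) + d_\alpha\,\psi_\alpha(x_i, \alphahat)$ is a sum of i.i.d.\ terms---this independence is precisely what the $\psi$-estimator reformulation buys. Applying Markov's inequality to $e^{-t\sum_i Y_i}$ for a parameter $t > 0$ to be chosen, the desired one-sided deviation bound reduces to showing
\[ e^{t/\log\frac{1}{\delta}} \cdot \bigl(\Exp_{x \sim D}[e^{-tY(x)}]\bigr)^n \;\leq\; \frac{\delta}{\log^4\frac{1}{\delta}} \]
uniformly over every distribution $D$ with mean $0$ and variance $1$, which reduces the task to bounding the worst-case MGF $\sup_D \Exp_D[e^{-tY(x)}]$ over this class of measures.

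To control this supremum I would invoke the duality strategy from Section~\ref{sec:contributions}. The constraints $\Exp_D[1] = 1$, $\Exp_D[x] = 0$, $\Exp_D[x^2] = 1$ mean that, by weak duality, any pointwise quadratic upper bound
\[ e^{-tY(x)} \;\leq\; A + B x + C x^2 \qquad \text{for all } x \in \Real \]
immediately yields $\Exp_D[e^{-tY(x)}] \leq A + C$, because the linear-in-$x$ multiplier $B$ is annihilated by $\Exp_D[x] = 0$. I would then choose the scalars $t, d_\mu, d_\alpha, A, B, C$ as explicit functions of $\vhat$ (with $\muhat = \eps'$ fixed) so that both (i)~the pointwise quadratic inequality holds, and (ii)~$(A+C)^n \cdot e^{t/\log(1/\delta)} \leq \delta/\log^4(1/\delta)$. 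Packaging the algebraic content of (i)--(ii) will be the role of the forthcoming Lemma~\ref{lem:quadratic}.

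Verifying the pointwise inequality requires splitting on whether $|x| \leq 1/\sqrt{\alphahat}$ or $|x| > 1/\sqrt{\alphahat}$. In the non-truncated regime, $Y(x)$ is the cubic $d_\mu(\muhat - x + \alphahat x^3) + d_\alpha\bigl(\alphahat x^2 - \tfrac{\log(1/\delta)}{3n}\bigr)$, and the quadratic upper bound must be delicately tuned to be approximately tangent to $e^{-tY(x)}$ at one or two points; in the truncated regime, $Y(x)$ collapses to the constant $Y_\infty = d_\mu \muhat + d_\alpha\bigl(1 - \tfrac{\log(1/\delta)}{3n}\bigr)$, so the condition $A + Bx + Cx^2 \geq e^{-tY_\infty}$ on $|x| > 1/\sqrt{\alphahat}$ reduces to an elementary boundary check at $|x| = 1/\sqrt{\alphahat}$. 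The specific endpoint values of $\vec{d}$ stated in the lemma are naturally consistent with this tuning: at $\vhat = 55.5$, $\Exp_D[\psi_\alpha]$ is negative (since a small $\alphahat$ makes $\Exp_D[\min(\alphahat x^2, 1)]$ too small to compensate $\tfrac{\log(1/\delta)}{3n}$), forcing $d_\alpha < 0$ to make the expected dot product positive while the optimal tuning sets $d_\mu = 0$; at $\vhat = 0.05$, both coordinates carry positive signal with the explicit values prescribed.

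The main obstacle is achieving the sharp $1 + o(1)$ constant: the parameters must be tuned so that $(A+C)^n \cdot e^{t/\log(1/\delta)}$ decays essentially like $\delta/\log^4(1/\delta)$, not only at extremal values of $\vhat$ but uniformly across the entire mesh $\vhat \in [0.05, 55.5]$. This is what pins $\eps'$ down to $(1 + c\log\log(1/\delta)/\log(1/\delta))\sqrt{2\log(1/\delta)/n}$ and is why Lemma~\ref{lem:quadratic} is extracted as its own delicate $4$-variable inequality. I expect its proof to consist of closed-form choices for $t, A, B, C, d_\mu, d_\alpha$ as functions of $\vhat$ (guided by first-order optimality conditions and matching coefficients in the cubic), followed by a case analysis verifying the quadratic inequality pointwise---tedious but essentially mechanical once the right guesses are made. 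The universal bounds on $\sqrt{n/\log(1/\delta)}|d_\mu|$ and $|d_\alpha|$ then follow directly from the tuning formulas.
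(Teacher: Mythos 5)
Your proposal follows essentially the same route as the paper: a Markov/Chernoff bound on the exponentiated negated dot product (exploiting the independence bought by the $\psi$-estimator form), followed by a weak-duality reduction that majorizes the single-sample MGF by a pointwise quadratic $A+Bx+Cx^2$ whose $B$-term is killed by the mean-zero constraint, with the delicate parameter tuning deferred to Lemma~\ref{lem:quadratic} and the $\vhat=55.5$ endpoint handled separately. The only cosmetic difference is your extra temperature parameter $t$, which is redundant since it can be absorbed into $\vec{d}$.
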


\begin{lemma}
\label{lem:vhat}
Consider an arbitrary set of $n$ samples $X$. Consider the expressions $\psi_\mu(X,\muhat,\alphahat),\psi_\alpha(X,\alphahat)$, reparameterized in terms of $\vhat\equiv\frac{\log(1/\delta)}{3n\alphahat}$ in place of $\alphahat$.
Suppose the equation $\psi_\alpha(X,\alphahat) = 0$ has a solution in the range $\vhat \in [0.05,55.5]$. Then the functions $\sqrt{\frac{\log(1/\delta)}{n}}\psi_\mu(X,\muhat,\alphahat)$ and $\psi_\alpha(X,\alphahat)$ are Lipschitz with respect to $\vhat$ on the entire interval $\vhat \in [0.05,55.5]$, with Lipschitz constant $c\log\frac{1}{\delta}$ for some universal constant $c$.
\end{lemma}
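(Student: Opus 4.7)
}
The plan is to directly differentiate $\psi_\mu$ and $\psi_\alpha$ with respect to $\vhat$ and show that their slopes are bounded by $O(\log\frac{1}{\delta})$ on the piecewise-linear pieces. Write $\alphahat = \frac{\log(1/\delta)}{3n\vhat}$, so $\frac{d\alphahat}{d\vhat} = -\frac{\log(1/\delta)}{3n\vhat^2}$. Let $S_{\text{norm}}(\vhat) = \{i : \alphahat x_i^2 < 1\}$. Since $\min(\alphahat x_i^2,1)$ is piecewise linear with slope $x_i^2$ on $S_{\text{norm}}$ and slope $0$ elsewhere, I get (away from kinks)
\[\frac{d\psi_\alpha}{d\vhat} = -\frac{\log(1/\delta)}{3n\vhat^2}\sum_{i\in S_{\text{norm}}(\vhat)} x_i^2, \qquad \frac{d\psi_\mu}{d\vhat} = -\frac{\log(1/\delta)}{3n\vhat^2}\sum_{i\in S_{\text{norm}}(\vhat)} x_i^3.\]
So the whole job reduces to bounding $\sum_{i\in S_{\text{norm}}(\vhat)} x_i^2$ and $\sum_{i\in S_{\text{norm}}(\vhat)}|x_i|^3$ uniformly in $\vhat\in[0.05,55.5]$.

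The central observation is that the hypothesis---that $\psi_\alpha=0$ has a solution $\vhat^\star\in[0.05,55.5]$---lets me control $\sum_i\min(\alphahat x_i^2,1)$ for \emph{every} $\vhat$ in the interval, not just $\vhat^\star$. Indeed, the function $\alphahat\mapsto\min(\alphahat x_i^2,1)$ is nondecreasing in $\alphahat$, so for $\vhat\geq\vhat^\star$ (i.e.\ $\alphahat\leq\alphahat^\star$) the total is at most $\sum_i\min(\alphahat^\star x_i^2,1)=\tfrac{1}{3}\log\tfrac{1}{\delta}$. And for $\vhat\leq\vhat^\star$, a direct case split on whether $\alphahat^\star x_i^2\gtrless 1$ shows $\min(\alphahat x_i^2,1)\leq \tfrac{\alphahat}{\alphahat^\star}\min(\alphahat^\star x_i^2,1)$, so the total is at most $\tfrac{\vhat^\star}{\vhat}\cdot\tfrac{1}{3}\log\tfrac{1}{\delta}$. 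Combining, for all $\vhat\in[0.05,55.5]$,
\[\sum_i \min(\alphahat x_i^2,1) \;\leq\; \max\!\left(1,\tfrac{\vhat^\star}{\vhat}\right)\cdot\tfrac{1}{3}\log\tfrac{1}{\delta} \;=\; O\!\left(\log\tfrac{1}{\delta}\right),\]
with the hidden constant depending only on the endpoints $0.05,55.5$.

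From here the two Lipschitz estimates fall out. Restricting to $S_{\text{norm}}(\vhat)$ where $\alphahat x_i^2=\min(\alphahat x_i^2,1)$, I get $\sum_{i\in S_{\text{norm}}(\vhat)}x_i^2 = \tfrac{1}{\alphahat}\sum_{i\in S_{\text{norm}}(\vhat)}\alphahat x_i^2 \leq \tfrac{1}{\alphahat}\cdot O(\log\tfrac{1}{\delta}) = O(n\vhat)$, and plugging into the formula for $\tfrac{d\psi_\alpha}{d\vhat}$ yields $|\tfrac{d\psi_\alpha}{d\vhat}|\leq O(\log\tfrac{1}{\delta}/\vhat) = O(\log\tfrac{1}{\delta})$ on the interval. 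For $\psi_\mu$, I use the fact that on $S_{\text{norm}}(\vhat)$ we have $|x_i|\leq 1/\sqrt{\alphahat}=\sqrt{3n\vhat/\log\tfrac{1}{\delta}}$, so
\[\sum_{i\in S_{\text{norm}}(\vhat)}|x_i|^3 \;\leq\; \sqrt{\tfrac{3n\vhat}{\log(1/\delta)}}\sum_{i\in S_{\text{norm}}(\vhat)}x_i^2 \;\leq\; O\!\left(n\sqrt{\tfrac{n\vhat^3}{\log(1/\delta)}}\right),\]
which gives $|\tfrac{d\psi_\mu}{d\vhat}|\leq O\!\left(\sqrt{n\log\tfrac{1}{\delta}}\right)$ and hence $\sqrt{\tfrac{\log(1/\delta)}{n}}\,|\tfrac{d\psi_\mu}{d\vhat}|\leq O(\log\tfrac{1}{\delta})$, as required.

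The argument above handles only points of differentiability; at the finitely many kinks $\vhat_i = \log(1/\delta)x_i^2/(3n)$ the functions are continuous and still piecewise linear, so the uniform bound on one-sided slopes upgrades to a global Lipschitz bound on $[0.05,55.5]$ by an elementary piecewise-linear argument. The only subtle point in the proof---and what I would treat as the main obstacle---is resisting the temptation to bound $\sum_{i\in S_{\text{norm}}(\vhat)}x_i^2$ by the empirical second moment (which we have no a priori control over, since the distribution may only have finite variance). The trick is to stay within the truncated world, using the hypothesis on $\vhat^\star$ rather than any sample moment bound.
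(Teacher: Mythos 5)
Your proof is correct and follows essentially the same route as the paper's: bound the derivative of each $\psi$ function by $\frac{1}{\vhat}$ (resp.\ $\frac{1}{\vhat\sqrt{\alphahat}}$) times $\sum_i\min(\alphahat x_i^2,1)$, and use the existence of a root of $\psi_\alpha$ in $[0.05,55.5]$ together with monotonicity/scaling in $\alphahat$ to keep that sum at $O(\log\frac{1}{\delta})$ across the whole interval. In fact you spell out that last step more explicitly than the paper does; the only nitpick is that in the $\vhat$ parameterization the pieces are of the form $c/\vhat$ rather than linear, which does not affect the kink argument.
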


We now prove Proposition~\ref{prop:main}, which per our previous discussion, implies our main result, Theorem~\ref{thm:main}.

\begin{proof}[Proof of Proposition~\ref{prop:main}]
As in Lemma~\ref{lem:Chernoff}, we fix $\eps'=(1+\frac{c\log\log \frac{1}{\delta}}{\log\frac{1}{\delta}})\sqrt{\frac{2\log\frac{1}{\delta}}{n}}$, where $c$ is some universal constant.

By symmetry, instead of considering positive and negative $\muhat$, it suffices to consider the case $\muhat>\eps'$ (as opposed to $\muhat < -\eps'$) and show that this case succeeds with probability at least $1-\frac{\delta}{4}$.

To prove the claim, we first prove a stronger statement on a restricted domain, that with probability at least $1-\frac{\delta}{4}$ over the randomness of the sample set $X$, for each $\vhat\in[0.05,55.5]$ there exists a vector $\vec{d}=(d_\mu,d_\alpha)$ such that $\vec{d}\cdot\vec{\psi}(X,\eps',\alphahat\equiv\frac{\log(1/\delta)}{3n\vhat})>0$, with $d_\mu\geq 0$ throughout, and, for $\vhat=0.05$ we have $d_\mu=\sqrt{3.75\frac{\log(1/\delta)}{n}}$, $d_\alpha=\sqrt{3}$; and for $\vhat=55.5$ we have $d_\mu=0$, $d_\alpha<0$.

We will first apply Lemma~\ref{lem:Chernoff} to each $\vhat$ in a discrete mesh: let $M$ consist of evenly spaced points between $0.05$ and $55.5$ with spacing $1/\log^3 \frac{1}{\delta}$ (thus with $\Theta(\log^3\frac{1}{\delta})$ many points).

By Lemma~\ref{lem:Chernoff} and a union bound over these $\Theta(\log^3\frac{1}{\delta})$ points, we have that with probability at least $1-\frac{\delta}{\Theta(\log\frac{1}{\delta})}$ (which is at least $1-\frac{\delta}{4}$ for $\delta$ smaller than some universal constant) over the set of $n$ samples $X$, for all $\vhat \in M$, there exists a vector $\vec{d}(\vhat)$ such that $\vec{d}(\vhat) \cdot \vec{\psi}(X,\muhat = \eps',\alphahat\equiv\frac{\log(1/\delta)}{3n\vhat}) > 1/\log\frac{1}{\delta}$, where $\vec{d}$ further satisfies the desired positivity and boundary conditions, and where both $\sqrt{\frac{n}{\log(1/\delta)}}|d_\mu|,|d_\alpha|$ are bounded by a universal constant. For the rest of the proof, we will only consider sets of samples $X$ satisfying the above condition. 

Now consider an arbitrary $\vhat' \in [0.05,55.5]\setminus M$ and consider the vector $\vec{\psi}$ evaluated at $\alphahat'=\frac{\log(1/\delta)}{3n\vhat'}$.
We wish to extend the dot product inequality to hold also for $\vhat'$.
If $\psi_\alpha\neq 0$ then there is nothing to prove: set $d_\mu=0$ and $d_\alpha=\mathrm{sign}(\psi_\alpha)$; otherwise, $\psi_\alpha=0$ means we may apply Lemma~\ref{lem:vhat} to conclude that both $\sqrt{\frac{\log(1/\delta)}{n}}\psi_\mu(X,\muhat,\alphahat')$ and $\psi_\alpha(X,\muhat,\alphahat')$ are Lipschitz with respect to $\vhat'$ on the interval $\vhat'\in[0.05,55.5]$, with Lipschitz constant $c\log\frac{1}{\delta}$ for some universal constant $c$.

Consider the closest $\vhat \in M$ to $\vhat'$, which by definition of $M$ is at most $1/\log^3\frac{1}{\delta}$ away.
By assumption on $X$, there exists a vector $\vec{d}$ such that $\vec{d}\cdot\vec{\psi}(X,\muhat=\eps',\alphahat=\frac{\log(1/\delta)}{3n\vhat}) > 1/\log\frac{1}{\delta}$, with $d_\mu\geq 0$ and both $\sqrt{\frac{n}{\log(1/\delta)}}|d_\mu|,|d_\alpha|$ are bounded by a universal constant. Because of the Lipschitz bounds on $\vec{\psi}$, combined with the bounds on the size of the $d_\mu,d_\alpha$, we conclude that the Lipschitz constant of the dot product (treating the vector $\vec{d}$ as fixed) is $O(\log\frac{1}{\delta})$.
Thus, the large positive dot product at $\vhat$ implies at least a positive dot product nearby at $\vhat'$: $\vec{d}\cdot\vec{\psi}(X,\muhat=\eps',\vhat') > \frac{1}{\log\frac{1}{\delta}} - O(\log\frac{1}{\delta})\frac{1}{\log^3\frac{1}{\delta}} > 0$, for sufficiently small $\delta$ as given in the proposition statement.

Having shown the stronger version of the claim for the restriction $\muhat=\eps'$ and $\vhat\in[0.05,55.5]$ we now extend to the entire domain via three monotonicity arguments.
Explicitly, assume the set of samples $X$ satisfies the dot product inequality above with the vector function $\vec{d}(\vhat)$, where $\vec{d}(\vhat)$ satisfies the boundary conditions at $\vhat = 0.05$ and $55.5$ specified in Lemma~\ref{lem:Chernoff}.
From this assumption, we will show that $\vec{\psi} \neq 0$ for \emph{any} positive $\vhat=\frac{\log(1/\delta)}{3n\alphahat}$, and for \emph{any} $\muhat\geq\eps'$.


First consider $\vhat>55.5$ (still fixing $\muhat = \eps'$). The function $\psi_\alpha=\sum_{i=1}^n \left(\min\left(\alphahat x_i^2,1\right) - \frac{1}{3n}\log\frac{1}{\delta} \right)$ is an increasing function of $\alphahat$, and thus a decreasing function of $\vhat\equiv\frac{\log(1/\delta)}{3n\alphahat}$.
Since for $\vhat=55.5$, the dot product $\vec{d}\cdot\vec{\psi}>0$ with $d_\mu=0,d_\alpha<0$, the dot product will thus remain positive for this same choice of $\vec{d}$ as we increase $\vhat$ from $55.5$.

Next, for $\vhat<0.05$ (again still fixing $\muhat = \eps'$), we analogously show that the dot product of $\vec{\psi}(X,\eps',\alphahat\equiv\frac{\log(1/\delta)}{3n\vhat})$ with the fixed vector $\vec{d}(0.05)$ will increase as we decrease $\vhat$.
The $i^\textrm{th}$ term in the sums defining $\psi_\mu$ or $\psi_\alpha$ depends on $\alphahat$ (and thus $\vhat$) only in the factor $\min(\alphahat x_i^2,1)$.
Further, there is no dependence unless the first term attains the min, namely $|x_i|\leq\sqrt{1/\alphahat}$, which in turn is upper bounded by $\sqrt{0.15\frac{n}{\log(1/\delta)}}$ because of our assumption that $\vhat < 0.05$.
Thus, the only $i^\text{th}$ terms in the dot product which have $\alphahat$ dependent are simply equal to $d_\mu \alphahat x_i^3 + d_\alpha \alphahat x_i^2=\alphahat x_i^2 (d_\alpha+x_i d_\mu)$.
By our choice of $d_\mu(0.05)=\sqrt{3.75\frac{\log(1/\delta)}{n}} $ and $d_\alpha(0.05)=\sqrt{3}$ from Lemma~\ref{lem:Chernoff}, the expression $(d_\alpha+x_i d_\mu)\geq \sqrt{3}-\sqrt{0.15}\sqrt{3.75}$ is thus always non-negative, and thus the overall dot product cannot decrease as we send $\alphahat$ to $\infty$---equivalently, sending $\vhat$ to 0---as desired. 

We have thus shown that, for all non-negative $\alphahat=\frac{\log(1/\delta)}{3n\vhat}$, there is a vector $\vec{d}$ with $d_\mu\geq 0$ whose dot product with $\vec{\psi}(X,\eps',\alphahat)$ is greater than 0. 
We complete the proof by noting that the only dependence on $\muhat$ in $\vec{\psi}$ is that $\psi_\mu$ is (trivially) increasing in $\muhat$.
Since $d_\mu \geq 0$, increasing $\muhat$ from $\eps'$ will only increase the dot product, and thus the dot product remains strictly greater than 0, implying that $\vec{\psi}(X,\muhat,\alphahat) \ne 0$ as desired.
\end{proof}

\section{Proofs of Lemmas~\ref{lem:Chernoff} and~\ref{lem:vhat}}

The main purpose of this section is to present and motivate the proof of Lemma~\ref{lem:Chernoff}---since our results are tight across such a wide parameter space, the resulting inequalities are somewhat subtle. After, we also present the short proof of Lemma~\ref{lem:vhat}.

\vspace*{3mm}\noindent\textbf{Lemma~\ref{lem:Chernoff}.}
\emph{
Consider an arbitrary distribution $D$ with mean 0 and variance 1.
There exists a universal constant $c$ where the following claim is true.
Fixing $\muhat = \eps'=\left(1+\frac{c\log\log \frac{1}{\delta}}{\log\frac{1}{\delta}}\right)\sqrt{\frac{2\log\frac{1}{\delta}}{n}}$, then for all $\delta$ smaller than some universal constant, and for all $\vhat \in [0.05, 55.5]$, there exists a vector $\vec{d}(\vhat)$ where $d_\mu\geq 0$, and both $\sqrt{\frac{n}{\log(1/\delta)}}|d_\mu|,|d_\alpha|$ are bounded by a universal constant, such that 
}
\begin{equation}\label{eq:chernoff-lemma}\Pr_{X \from D^n}\left(\vec{d}(\vhat)\cdot \vec{\psi}\left(X, \muhat = \eps', \alphahat=\frac{\log(1/\delta)}{3n\vhat}\right) > \frac{1}{\log\frac{1}{\delta}}\right) \ge 1-\frac{\delta}{\log^4\frac{1}{\delta}}\end{equation}
\emph{Furthermore, for $\vhat=0.05$ we have $d_\mu=\sqrt{3.75\frac{\log(1/\delta)}{n}}$, $d_\alpha=\sqrt{3}$; and for $\vhat=55.5$ we have $d_\mu=0$, $d_\alpha<0$.}\medskip

We start the analysis via standard Chernoff bounds  on the complement of the probability in Equation~\ref{eq:chernoff-lemma} via Lemma~\ref{lem:chernoff-start}, before pausing to discuss how mathematical programming and duality insights lead to the formulation of the crucial Lemma~\ref{lem:quadratic}; we then complete the proof.

\begin{lemma}\label{lem:chernoff-start}
Consider an arbitrary distribution $D$ with mean 0 and variance 1. For all sufficiently small $\delta$, for any $\muhat,\alphahat$ and vector $\vec{d}=(d_\mu,d_\alpha)$, we have \[\Pr_{X \from D^n}\left(\vec{d}\cdot \vec{\psi}\left(X, \muhat, \alphahat\right) \leq \frac{1}{\log\frac{1}{\delta}}\right)\leq 2 \left(e^{-d_\mu\muhat+d_\alpha\frac{1}{3n}\log\frac{1}{\delta}}\Exp_{x \from D} (e^{d_\mu x(1-\min(\alphahat x^2,1))-d_\alpha\min(\alphahat x^2,1)})\right)^n\]
\end{lemma}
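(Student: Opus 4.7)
The approach is a textbook Chernoff bound, with only one minor twist (the constant-$2$ slack on the right-hand side), so the plan is essentially: rewrite the event as a sum of i.i.d.\ terms, apply Markov to the exponential, and absorb a small factor using the assumption that $\delta$ is small.

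First I would unpack $\vec{d}\cdot\vec{\psi}(X,\muhat,\alphahat) = d_\mu\psi_\mu + d_\alpha\psi_\alpha$ using the definitions in Definition~\ref{def:psi}, to write it as $\sum_{i=1}^n Y_i$, where
\[ Y_i = d_\mu\muhat - d_\mu x_i(1-\min(\alphahat x_i^2,1)) + d_\alpha\min(\alphahat x_i^2,1) - d_\alpha\tfrac{1}{3n}\log\tfrac{1}{\delta}. \]
Since the $x_i$ are i.i.d.\ draws from $D$, the $Y_i$ are i.i.d.\ as well (note that $\muhat$, $\alphahat$, and $\vec{d}$ are held fixed, so their contributions are deterministic shifts shared by every term).

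Next, I would apply Markov's inequality to $e^{-\sum_i Y_i}$ with parameter $t=1$: for any real threshold $c$,
\[ \Pr\!\left(\sum_i Y_i \le c\right) = \Pr\!\left(e^{-\sum_i Y_i} \ge e^{-c}\right) \le e^{c}\,\Exp\!\left[e^{-\sum_i Y_i}\right] = e^{c}\,\bigl(\Exp[e^{-Y_1}]\bigr)^n, \]
where the last equality uses independence. Setting $c=1/\log\frac{1}{\delta}$ gives a prefactor of $e^{1/\log(1/\delta)}$, which for all sufficiently small $\delta$ is bounded above by $2$, yielding the constant $2$ out front in the lemma.

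Finally I would simplify $\Exp[e^{-Y_1}]$: the deterministic terms $-d_\mu\muhat + d_\alpha\frac{1}{3n}\log\frac{1}{\delta}$ pull out of the expectation as the factor $e^{-d_\mu\muhat + d_\alpha\frac{1}{3n}\log\frac{1}{\delta}}$, leaving
\[ \Exp_{x\from D}\!\left[e^{d_\mu x(1-\min(\alphahat x^2,1)) - d_\alpha\min(\alphahat x^2,1)}\right], \]
which matches the expression inside the $n$th power in the lemma statement. Combining the three estimates above gives exactly the stated bound. There is no real obstacle here; the only delicate point is noting that $t=1$ (rather than an optimized $t$) is the right Chernoff parameter for this argument, because the tightness of the eventual Chernoff bound will be obtained through the choice of $\vec{d}$ and $\muhat$ in Lemma~\ref{lem:Chernoff}, not through optimizing $t$.
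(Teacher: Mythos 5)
Your proposal is correct and matches the paper's proof essentially step for step: exponentiate the negated dot product, apply Markov's inequality with the threshold $e^{-1/\log(1/\delta)}$ so the prefactor $e^{1/\log(1/\delta)}\le 2$ for small $\delta$, factor the expectation over the $n$ i.i.d.\ samples, and pull out the deterministic terms $-d_\mu\muhat+d_\alpha\frac{1}{3n}\log\frac{1}{\delta}$. Your closing remark about deferring all optimization to the choice of $\vec{d}$ in Lemma~\ref{lem:Chernoff} is exactly the role this lemma plays in the paper.
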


\begin{proof} 
We upper-bound the probability by exponentiating the negation of both sides  of the expression inside the probability, and then using Markov's inequality:
\allowdisplaybreaks
\begin{align}&\;\hspace{4mm}\Pr_{X \from D^n}\left(\vec{d}(\vhat)\cdot \vec{\psi}(X, \muhat, \alphahat) \leq \frac{1}{\log\frac{1}{\delta}}\right)\nonumber\\
&=\Pr_{X \from D^n}\left(e^{-\vec{d}(\vhat)\cdot \vec{\psi}(X, \muhat, \alphahat)} \geq e^{-\frac{1}{\log\frac{1}{\delta}}}\right)\nonumber\\
&\leq 2\Exp_{X \from D^n}\left(e^{-\vec{d}(\vhat)\cdot \vec{\psi}(X, \muhat, \alphahat)}\right)\quad\text{by Markov's inequality; and  $e^{\frac{1}{\log(1/\delta)}}\leq 2$ for sufficiently small $\delta$}\nonumber\\
&=2 \Exp_{x \from D} (e^{-\vec{d}(\vhat)\cdot\vec{\psi}(x,\muhat,\alphahat)})^n\quad\text{by independence}\nonumber\\
&=2 \left(e^{-d_\mu\muhat+d_\alpha\frac{1}{3n}\log\frac{1}{\delta}}\Exp_{x \from D} (e^{d_\mu x(1-\min(\alphahat x^2,1))-d_\alpha\min(\alphahat x^2,1)})\right)^n\quad\text{substituting the definition of $\vec{\psi}$}\label{eq:chernoff}
\end{align}
\end{proof}

\subsection{Mathematical Programming and Duality Analysis}\label{sec:duality}
In order to show Lemma~\ref{lem:Chernoff}, we aim to find bounds on the failure probability that are as strong as possible. Appealing to Lemma~\ref{lem:chernoff-start} that we have just proven, recall that, as in the standard Chernoff bound methodology, we are still free to choose the parameters $d_\mu,d_\alpha$, which we do so as to minimize the resulting bound on the failure probability. Phrased abstractly, the goal is, for the $\muhat,\alphahat$ of Lemma~\ref{lem:Chernoff}, to show that, for any distribution $D$ of mean 0 and variance 1, there is a choice $\vec{d}=(d_\mu,d_\alpha)$ that makes Equation~\ref{eq:chernoff} sufficiently small. Phrased as an optimization problem, our goal is to evaluate (or tightly bound):

\begin{equation}\label{eq:max-min}\max_D\min_{\vec{d}=(d_\mu,d_\alpha)}
e^{-d_\mu\muhat+d_\alpha\frac{1}{3n}\log\frac{1}{\delta}}\Exp_{x \from D} (e^{d_\mu x(1-\min(\alphahat x^2,1))-d_\alpha\min(\alphahat x^2,1)})\end{equation}
where $D$ ranges over distributions of mean 0 and variance 1.

We will use convex-concave programming and linear programming duality to significantly simplify the max-min program in Equation~\ref{eq:max-min} before we dive into the part of analysis that is ad hoc for this problem.
We wish to emphasize here again that the steps of 1) writing an estimator as a multi-parameter $\psi$-estimator and finding an analogous lemma to our Lemma~\ref{lem:Chernoff}, then 2) using mathematical programming duality to simplify the Chernoff bound analysis, are a framework generalizable for tightly analyzing other estimators.

For simplicity of exposition, assume that we restrict the support of $D$ to some sufficiently fine-grained finite set, meaning that the maximization in Equation~\ref{eq:max-min} is now finite-dimensional, albeit an arbitrarily large finite number.
For each support element $x$, let $D_x$ be a variable representing the probability of choosing $x$ under distribution $D$. The expectation component of Equation~\ref{eq:max-min} may now be expressed as sum that is a linear function in the variables $D_x$:

\begin{equation}\label{eq:max-min-finite}\max_D\min_{\vec{d}=(d_\mu,d_\alpha)}
e^{-d_\mu\muhat+d_\alpha\frac{1}{3n}\log\frac{1}{\delta}}\sum_x D_x \cdot e^{d_\mu x(1-\min(\alphahat x^2,1))-d_\alpha\min(\alphahat x^2,1)}\end{equation}

Using the standard max-min inequality (a form of weak duality in optimization), we have that Equation~\ref{eq:max-min-finite} is upper bounded by swapping the maximization and minimization (Equation~\ref{eq:min-max}), meaning that the vector $\vec{d}$ no longer depends on the distribution $D$.
\begin{equation}
    \label{eq:min-max}
    \min_{\vec{d}=(d_\mu,d_\alpha)}\max_D
e^{-d_\mu\muhat+d_\alpha\frac{1}{3n}\log\frac{1}{\delta}}\sum_x D_x \cdot e^{d_\mu x(1-\min(\alphahat x^2,1))-d_\alpha\min(\alphahat x^2,1)}
\end{equation}
Crucially, however, Equation~\ref{eq:min-max} is not just an upper bound on Equation~\ref{eq:max-min-finite}, but is in fact \emph{equal} to it, due to Sion's minimax theorem~\cite{Sion:1958}.
To apply Sion's minimax theorem, it suffices to check that 1) both $\vec{d}$ and $D$ are constrained to be in convex sets, at least one of which is compact, 2) the objective is convex in $\vec{d}$ and 3) concave in the variables $D_x$.
For the first condition, we note that the set of distributions on a finite domain is compact. The objective is convex in $\vec{d}$ since the objective is the sum of exponentials that are each linear in $\vec{d}$. And the objective is concave in $D_x$ because it is in fact a linear function of $D$.

The guarantee of Sion's minimax theorem means that we may work with Equation~\ref{eq:min-max} instead of Equation~\ref{eq:max-min-finite} without sacrificing tightness in our analysis. This justifies why we are free to choose $\vec{d}=(d_\mu,d_\alpha)$ in Lemma~\ref{lem:Chernoff} that does not depend on the distribution $D$.

To further simplify the problem in Equation~\ref{eq:min-max}, we note again that both the objective and the constraints on $D$ are linear in the variables $D_x$, meaning that the inner maximization is in fact a linear program.
We can then apply linear programming (strong) duality to yield the following equivalent optimization (Equation~\ref{eq:min-min}).
We note that, as above, for the purposes of upper bounding Equation~\ref{eq:max-min}, it suffices to only use weak duality.
Strong duality however guarantees that this step does not introduce slack into the analysis.

The three variables $V,M,S$ in the inner minimization below are the dual variables corresponding to the three constraints on distribution $D$ originally: that $D$ has variance 1, mean 0, and total probability mass 1.

\vspace*{-2mm}
\begin{equation}
\label{eq:min-min}
    \begin{aligned}
        & \min_{\vec{d}=(d_\mu,d_\alpha)} \min_{V, M, S} V+S\\
        \text{for all $x$:}\quad & Vx^2 + Mx + S \ge e^{-d_\mu\muhat+d_\alpha\frac{1}{3n}\log\frac{1}{\delta}+d_\mu x(1-\min(\alphahat x^2,1))-d_\alpha\min(\alphahat x^2,1)}
    \end{aligned}
\end{equation}

We have thus reduced the infinite-dimensional optimization problem of Equation~\ref{eq:max-min} to the five-dimensional problem of Equation~\ref{eq:min-min} (or six dimensions, if we include the universal quantification for $x\in\Real$), a significant simplification.
We bound Equation~\ref{eq:min-min} by explicitly choosing values for $\vec{d}=(d_\mu,d_\alpha),V, M, S$ as functions of $\alphahat,n,\log\frac{1}{\delta}$, and showing that they jointly satisfy the constraint of Equation~\ref{eq:min-min}, for all $x$. We factor out the terms in the exponential that do not depend on $x$; we make the variable substitutions $y\equiv\sqrt{\alphahat}x$ and $\vhat\equiv\frac{\log(1/\delta)}{3n\alphahat}$ to replace dependence on $\alphahat,n,\log\frac{1}{\delta}$ with dependence on the single variable $\vhat$; taking the log of both sides (and swapping sides) yields an expression that is recognizable in the following lemma, where the multipliers of $1,y,y^2$ respectively on the right hand side are essentially our choices of $S,M,V$:

\begin{lemma}
\label{lem:quadratic}
For all $\vhat \in [0.05,55.5]$, there exist $a>0$ and $b$ such that
$$ \forall y\in\mathbb{R}:\,a y\left(1-\min\left(y^2,1\right)\right) - b\cdot\min\left(y^2,1\right) \le \log\left(1+a y+y^2\vhat(-3+\frac{a\sqrt{6}}{\sqrt{\vhat}}-b)\right)$$
where $a\in[C,C']$ and $b\in[-C',C']$ for positive constants $C,C'$.
Further, for $\vhat=0.05$, the pair $a=0.75,b=\sqrt{3}$ works.
\end{lemma}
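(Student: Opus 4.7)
The strategy is to split the analysis based on the piecewise structure of $\min(y^2,1)$, handling $|y|\leq 1$ and $|y|>1$ separately. For convenience, introduce $c\equiv \vhat(-3+a\sqrt{6/\vhat}-b)=-3\vhat+a\sqrt{6\vhat}-b\vhat$, so the right-hand side becomes $\log(1+ay+cy^2)$; for this to even be finite we will require $c>0$ and $a^2<4c$, which keeps $1+ay+cy^2>0$ on all of $\mathbb{R}$. Note also that at $\vhat=0.05$ with $a=0.75,b=\sqrt{3}$ one computes $c\approx 0.174$ and $a^2-4c<0$, so these sanity constraints hold at the claimed explicit point.

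For $|y|>1$, the LHS collapses to the constant $-b$, so the task is to show $e^{-b}\leq 1+ay+cy^2$ on $\{|y|>1\}$. Since $c>0$, the minimum of the quadratic over this set is achieved either at the vertex $y=-a/(2c)$ (if $|a/(2c)|>1$) or at $y=\pm 1$, whichever applies, so the condition reduces to one or two inequalities on $(a,b,c)$. At the explicit endpoint $\vhat=0.05$ the binding point is $y=-1$, giving $1-a+c\approx 0.42\geq e^{-\sqrt{3}}\approx 0.18$, which indeed holds with room.

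For $|y|\leq 1$, the inequality becomes $ay-ay^3-by^2\leq \log(1+ay+cy^2)$. Both sides vanish at $y=0$ and share first derivative $a$ there; the second-derivative comparison forces $2c-a^2+2b\geq 0$, and the third-derivative comparison is where the $\sqrt{6/\vhat}$ scaling in $c$ is pinned down --- this matches the third-order correction that motivated the estimator in the first place. To finish on the compact interval, I would invoke a truncated lower bound on $\log$, e.g.\ $\log(1+u)\geq u-\tfrac{u^2}{2}+\tfrac{u^3}{3}-\tfrac{u^4}{4(1-|u|)}$ valid for $|u|<1$, with $u=ay+cy^2$, reducing the claim to a polynomial non-negativity statement on $[-1,1]$ that can be verified by sum-of-squares decomposition, explicit factorization, or rigorous interval arithmetic once $(a,b)$ is chosen.

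The main obstacle is the uniform choice of $(a(\vhat),b(\vhat))$ in the bounded box $[C,C']\times[-C',C']$ across the wide range $\vhat\in[0.05,55.5]$, since the outer Chernoff analysis is tight up to a $1+o(1)$ factor and leaves essentially no slack. The left endpoint $\vhat=0.05$ is hardest: $c$ is smallest there, making the Case $|y|>1$ margin against $e^{-b}$ thinnest and the Case $|y|\leq 1$ Taylor coefficients least forgiving, which forces the specific values $a=0.75,b=\sqrt{3}$. At the right endpoint $\vhat=55.5$, the larger $c$ allows a much more relaxed choice (consistent with the $d_\mu=0,d_\alpha<0$ boundary condition in Lemma~\ref{lem:Chernoff}). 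Rather than seeking a single closed-form $(a(\vhat),b(\vhat))$, I would partition $[0.05,55.5]$ into a modest number of sub-intervals, pick $(a,b)$ as a simple (e.g.\ piecewise-linear) function of $\vhat$ on each, and verify the resulting polynomial inequality either analytically or via rigorous symbolic computation; the conceptual content is the reduction to polynomial form above, with the remaining verification being essentially bookkeeping.
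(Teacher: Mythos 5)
Your proposal correctly identifies the overall structure (split at $|y|=1$; on $|y|>1$ the left side is the constant $-b$ and one minimizes the quadratic inside the logarithm; on $|y|\leq 1$ both sides vanish at $y=0$ with matching first derivatives, and the second- and third-order conditions constrain $(a,b)$). However, it stops exactly where the actual difficulty of the lemma lies: you never exhibit the functions $a(\vhat),b(\vhat)$, and you defer the uniform verification over $\vhat\in[0.05,55.5]$ to ``piecewise-linear choices plus symbolic computation.'' That deferral is not bookkeeping --- it is the content of the lemma. The paper's proof works because of a specific, non-obvious algebraic choice: take $a$ to be the positive root of $\sqrt{\vhat}(a^2-12)+\sqrt{6}a=0$ and set $b=3-a^2/2$. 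With this choice the coefficient of $y^2$ inside the logarithm becomes $\frac{3a^2}{12-a^2}$, and the $y$-derivative of (RHS $-$ LHS) on $[-1,1]$ factors exactly as $\frac{1}{3a^2}\cdot\frac{y(y+\frac{2}{a})(y+\frac{2}{a}-\frac{a}{3})^2}{y^2+(\frac{4}{a}-\frac{a}{3})y+(\frac{4}{a^2}-\frac{1}{3})}$, whose sign is that of $y(y+\frac{2}{a})$. This reduces the entire $|y|\leq1$ verification, uniformly in $\vhat$, to checking the two points $y=0$ and $y=-1$, each of which becomes a one-variable scalar inequality in $a$. Without an analogous mechanism, there is no reason to believe a generic piecewise-linear $(a,b)$ survives an inequality this tight.

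There is also a concrete technical failure in your proposed reduction for $|y|\leq 1$: the truncated expansion $\log(1+u)\geq u-\frac{u^2}{2}+\frac{u^3}{3}-\frac{u^4}{4(1-|u|)}$ requires $|u|<1$ with $u=ay+cy^2$, but for large $\vhat$ the quadratic coefficient $c$ is large (with the correct choice of $a$, at $\vhat=55.5$ one gets $a\approx 3.3$ and $c\approx 30$, so $|u|$ reaches roughly $33$ on $[-1,1]$). Your Taylor-based polynomial reduction therefore does not apply on most of the interval for the upper part of the $\vhat$ range. Separately, your reading of the endpoint is slightly off: $a=0.75,b=\sqrt{3}$ at $\vhat=0.05$ is not ``forced'' by tightness there --- the paper's general formula would give $a\approx 1.003$ at $\vhat=0.05$; the pair $(0.75,\sqrt{3})$ is a deliberately discontinuous, convenient choice handled as a separate easy case.
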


We emphasize that the application of Lemma~\ref{lem:quadratic} in the proof of Lemma~\ref{lem:Chernoff} below is straightforward, though finding the particular form of Lemma~\ref{lem:quadratic} is not.
Further, one would not seek a result of the form of Lemma~\ref{lem:quadratic} without the guarantees of this section, derived via duality and mathematical programming, showing that ``results of the form of Lemma~\ref{lem:quadratic} encompass the full power of the Chernoff bounds of Equation~\ref{eq:chernoff}." See the end of Section~\ref{sec:Chernoff-proof} for the proof of Lemma~\ref{lem:quadratic}.

\subsection{Proof of Lemma~\ref{lem:Chernoff}}
\label{sec:Chernoff-proof}

We now prove Lemma~\ref{lem:Chernoff} by combining the Chernoff bound analysis of Lemma~\ref{lem:chernoff-start} with the inequality from Lemma~\ref{lem:quadratic}.
We point out that the proof below is direct, without any reference to duality or mathematical programming; however, the discussion of Section~\ref{sec:duality} was crucial to discovering the right formulation for Lemma~\ref{lem:quadratic}.
We prove Lemma~\ref{lem:quadratic} at the end of the section.

\vspace*{3mm}\noindent\textbf{Lemma~\ref{lem:Chernoff}.}
\emph{
Consider an arbitrary distribution $D$ with mean 0 and variance 1.
There exists a universal constant $c$ where the following claim is true.
Fixing $\muhat = \eps'=\left(1+\frac{c\log\log \frac{1}{\delta}}{\log\frac{1}{\delta}}\right)\sqrt{\frac{2\log\frac{1}{\delta}}{n}}$, then for all $\delta$ smaller than some universal constant, and for all $\vhat \in [0.05, 55.5]$, there exists a vector $\vec{d}(\vhat)$ where $d_\mu\geq 0$, and both $\sqrt{\frac{n}{\log(1/\delta)}}|d_\mu|,|d_\alpha|$ are bounded by a universal constant, such that 
}
\begin{equation*}
\Pr_{X \from D^n}\left(\vec{d}(\vhat)\cdot \vec{\psi}\left(X, \muhat = \eps', \alphahat=\frac{\log(1/\delta)}{3n\vhat}\right) > \frac{1}{\log\frac{1}{\delta}}\right) \ge 1-\frac{\delta}{\log^4\frac{1}{\delta}}\end{equation*}
\emph{Furthermore, for $\vhat=0.05$ we have $d_\mu=\sqrt{3.75\frac{\log(1/\delta)}{n}}$, $d_\alpha=\sqrt{3}$; and for $\vhat=55.5$ we have $d_\mu=0$, $d_\alpha<0$.}

\begin{proof}
Start with the bound on the probability of failure given by Lemma~\ref{lem:chernoff-start}: \[2 \left(e^{-d_\mu\muhat+d_\alpha\frac{1}{3n}\log\frac{1}{\delta}}\Exp_{x \from D} (e^{d_\mu x(1-\min(\alphahat x^2,1))-d_\alpha\min(\alphahat x^2,1)})\right)^n\]

For $\vhat\in[0.05,55.5)$ we bound the exponential inside the expectation via the exponential of Lemma~\ref{lem:quadratic}; we also use Lemma~\ref{lem:quadratic} to choose $d_\mu,d_\alpha$ for us (the $\vhat=55.5$ case is covered at the end). Namely, in Lemma~\ref{lem:quadratic} use $\vhat$ as given, substitute $x\sqrt{\alphahat}\equiv y$ (where $\alphahat\equiv\frac{\log(1/\delta)}{3n\vhat}$ as always), and choose $d_\mu\equiv a\sqrt{\alphahat}$, and $d_\alpha \equiv b$---in particular, for $\vhat=0.05$ this gives $d_\mu(0.05)=0.75\sqrt{\alphahat}=0.75\sqrt{\frac{\log(1/\delta)}{3n\vhat}}=\sqrt{3.75\frac{\log(1/\delta)}{n}}$. Thus the failure probability  is bounded by

\allowdisplaybreaks
\begin{align*}\label{eq:chernoff3}&\;\hspace{4mm}2 \left(e^{-d_\mu\muhat+d_\alpha\frac{1}{3n}\log\frac{1}{\delta}}\Exp_{\substack{x \from D\\y=x\sqrt{\alphahat}}} \left(1+a y+y^2\vhat\left(-3+\frac{a\sqrt{6}}{\sqrt{\vhat}}-b\right)\right)\right)^n\\
&=2 \left(e^{-d_\mu\muhat+d_\alpha\frac{1}{3n}\log\frac{1}{\delta}}\left(1+ \frac{\log\frac{1}{\delta}}{3n}\left(-3 + 3d_\mu\sqrt{\frac{2n}{\log(1/\delta)}} - d_\alpha\right)\right)\right)^n\quad\text{since $D$ has mean 0, variance 1}\\
&\leq 2 \left(e^{-d_\mu\muhat+d_\alpha\frac{1}{3n}\log\frac{1}{\delta}+ \frac{\log\frac{1}{\delta}}{3n}\left(-3 + 3d_\mu\sqrt{\frac{2n}{\log(1/\delta)}} - d_\alpha\right)}\right)^n\quad\text{since $1+z\leq e^z$ for any $z$}\\&\leq 2 e^{-d_\mu\sqrt{\frac{2n}{\log(1/\delta)}}c\log\log\frac{1}{\delta}-\log\frac{1}{\delta}}\quad\text{substituting $\muhat = \eps'=\left(1+\frac{c\log\log \frac{1}{\delta}}{\log\frac{1}{\delta}}\right)\sqrt{\frac{2\log\frac{1}{\delta}}{n}}$}\\
&\leq \frac{\delta}{\log^4\frac{1}{\delta}}\quad\text{as desired, for large enough c, since $d_\mu\sqrt{\frac{n}{\log(1/\delta)}}=\frac{a}{\sqrt{3\vhat}}$ is greater than some positive constant.}
\end{align*}

We prove the $\vhat=55.5$ case now. We choose $\d_\mu=0$ and $d_\alpha=-4$, substituting into the bound of Equation~\ref{eq:chernoff} to yield

\begin{align*}2 \left(e^{-\frac{4}{3n}\log\frac{1}{\delta}}\Exp_{x \from D} (e^{4\min(\alphahat x^2,1)})\right)^n&\leq 2\delta^{4/3}\Exp_{x\from D} (1+54\alphahat x^2)^n\quad\text{since for $y\in[0,1]$, $e^{4y}\leq 1+54 y$}\\
&=2\delta^{4/3} (1+55.5\alphahat)^n\quad\text{since $D$ has variance 1}\\
&\leq 2\delta^{4/3} e^{n\cdot 54\frac{\log(1/\delta)}{3\cdot55.5 n}}\quad\text{since $1+z\leq e^z$, substituting def. of $\alphahat$}\\
&=2\delta^{4/3}\delta^{-\frac{54}{3\cdot 55.5}}\leq 2\delta^{1.009}
\end{align*}
which is bounded as desired for small enough $\delta$.
\end{proof}

We now prove Lemma~\ref{lem:quadratic}.

\vspace*{3mm}\noindent\textbf{Lemma~\ref{lem:quadratic}.}
\emph{For all $\vhat \in [0.05,55.5]$, there exist $a>0$ and $b$ such that}
\begin{equation}\label{eq:quadratic-inequality}\forall y\in\mathbb{R}:\,a y\left(1-\min\left(y^2,1\right)\right) - b\cdot\min\left(y^2,1\right) \le \log\left(1+a y+y^2\vhat(-3+\frac{a\sqrt{6}}{\sqrt{\vhat}}-b)\right)\end{equation}
\emph{where $a\in[C,C']$ and $b\in[-C',C']$ for positive constants $C,C'$.
Further, for $\vhat=0.05$, the pair $a=0.75,b=\sqrt{3}$ works.
}

\begin{proof}
We first prove the special case of 1) $\vhat = 0.05$, before moving to the general case of 2) $\vhat \in (0.05, 55.5]$.
We note that our choice of $a(\vhat),b(\vhat)$ is \emph{not} continuous in $\vhat$ at $0.05$, but the usage of the lemma does not require any continuity.
We choose $a,b$ at the edge case $\vhat=0.05$ for convenience.

{\bf 1)} For $\vhat = 0.05$, we choose $a=0.75,b=\sqrt{3}$. This special case of Equation~\ref{eq:quadratic-inequality} simplifies to: \[\forall y\in\mathbb{R}:\,0.75 y\left(1-\min\left(y^2,1\right)\right) - \sqrt{3}\cdot\min\left(y^2,1\right) \le \log\left(1+0.75 y+0.174y^2\right)\]
(where 0.174 is a lower bound on $\hat{v}(-3+\frac{a\sqrt{6}}{\sqrt{\hat{v}}}-b)$ ). This is a 1-dimensional bound and can be easily analyzed in many ways. For the range $y\in[-1,1]:$ the right hand side is at least $\log(1+0.75y)$, which in this range is at least $.75y-.75 y^2$, which is easily shown to be greater than the polynomial expression that the left hand side reduces to in this range, $0.75y-\sqrt{3}y^2-0.75 y^3$. For the remaining range, $y\notin[-1,1]$, the left hand side is the constant $-\sqrt{3}$, and it is easy to check that the quadratic in the argument of the right hand side, $1+0.75y+0.174y^2$, always exceeds $e^{-\sqrt{3}}$.

{\bf 2)} To show Equation~\ref{eq:quadratic-inequality} for the rest of the range of $\vhat \in (0.05, 55.5]$, we choose $a$ to be the positive root of the quadratic equation $\sqrt{\vhat}(a^2-12)+\sqrt{6}a=0$ and let $b=3-a^2/2$---we will see the motivation for this choice shortly. For now, note that the definition of $a$ implies $a\leq\sqrt{12}$, for otherwise $\sqrt{\vhat}(a^2-12)+\sqrt{6}a$ would be greater than 0.

Our proof will analyze the sign of the derivative with respect to $y$ of the difference between the right and left hand sides of Equation~\ref{eq:quadratic-inequality}. For the critical region $|y|\leq 1$ this derivative equals:

\begin{equation}\label{eq:deriv}\frac{a+2y\hat{v}(-3+\frac{a\sqrt{6}}{\sqrt{\hat{v}}}-b)}{1+a y+y^2\hat{v}(-3+\frac{a\sqrt{6}}{\sqrt{\hat{v}}}-b)}-a+3a y^2 +2b y\end{equation}

The crucial step is to choose $a$ to be the positive root of the quadratic equation $\sqrt{\vhat}(a^2-12)+\sqrt{6}a=0$ and let $b=3-a^2/2$, after which Equation~\ref{eq:deriv} miraculously factors as

\[\frac{1}{3a^2}\cdot\frac{y(y+\frac{2}{a})(y+\frac{2}{a}-\frac{a}{3})^2}{y^2+(\frac{4}{a}-\frac{a}{3})y+(\frac{4}{a^2}-\frac{1}{3})}
\]

From this expression for the derivative, it is straightforward to read off its sign. The discriminant of the quadratic in the denominator is $\frac{1}{9}(a^2-12)>0$, meaning the denominator is always positive. The squared term in the numerator cannot affect the overall sign. Thus the sign of the derivative equals the sign of $y(y+\frac{2}{a})$, meaning that the difference between the right and left side of Equation~\ref{eq:quadratic-inequality} is monotonically increasing for $y>0$, and unimodal for $y<0$, having non-positive derivative for $y\in [\frac{2}{a},0]$ and nonnegative derivative for smaller $y$. Thus to show the inequality holds for all $y\in[-1,1]$ it suffices to check it at $y=0$ and $y=-1$.

The $y=0$ case is trivial as both sides of Equation~\ref{eq:quadratic-inequality} equal 0.

For $y=-1$, Equation~\ref{eq:quadratic-inequality}, after expressing both $\sqrt{\hat{v}}$ and $b$ in terms of $a$ becomes


\begin{equation}\label{eq:neg-one-case}\frac{a^2}{2}-3 \le \log\left(-2+a-\frac{36}{a^2-12}\right)\end{equation}

For $a\in[0,\sqrt{12})$, the inverse of the rational expression inside the log is bounded by its linear approximation, $\frac{\sqrt{12}-a}{\sqrt{12}}$. Calling this a new variable $z=\frac{\sqrt{12}-a}{\sqrt{12}}$, which is between 0 and 1, Equation~\ref{eq:neg-one-case} becomes the claim that $6(1-z)^2-3\leq -\log z$, which is easily verified for $z\in(0,1]$.

Lastly, we show Equation~\ref{eq:quadratic-inequality} for $|y| > 1$.
Reexpressing $b$ and $\sqrt{\hat{v}}$ in terms of $a$, the left hand side of the inequality is the constant value  $-b=-3+\frac{a^2}{2}$ (independent of $y$), while the right hand side is $\log(1+a y+\frac{3a^2}{12-a^2}y^2)$.
Analyzing the quadratic inside the log shows that the right hand side has a minimum of $\frac{a^2}{12}$, attained at $y = -\frac{12-a^2}{6a}$.

When the location of this minimum, $y = -\frac{12-a^2}{6a}$, is inside the interval $[-1,1]$, then because this quadratic is monotonic to either side of the minimum, the fact that we have already proven Equation~\ref{eq:quadratic-inequality} for $y=\pm 1$ implies the inequality holds for all $y$ further from 0.

The remaining case is when the minimum is not in $[-1,1]$, namely, $-\frac{12-a^2}{6a}<-1$, meaning $a<1.59$; since $a$ is monotonic in $\hat{v}$, $a$ is at least its value when $\hat{v}=0.05$, namely $a\geq 1.003$. Equation~\ref{eq:quadratic-inequality} thus reduces to showing that, for $a\in[1.003,1.59]$ we have $\frac{a^2}{2}-3\leq\log\frac{a^2}{12}$, which is trivially implied, substituting $z=\frac{a^2}{12}$, by the inequality $6z-3\leq\log z$ for $z\in [0.083,0.22]$, yielding the claim.
\end{proof}

\subsection{Proof of Lemma~\ref{lem:vhat}}
\vspace*{3mm}\noindent\textbf{Lemma~\ref{lem:vhat}.}
\emph{
Consider an arbitrary set of $n$ samples $X$. Consider the expressions $\psi_\mu(X,\muhat,\alphahat),\psi_\alpha(X,\alphahat)$, reparameterized in terms of $\vhat\equiv\frac{\log(1/\delta)}{3n\alphahat}$ in place of $\alphahat$.
Suppose the equation $\psi_\alpha(X,\alphahat) = 0$ has a solution in the range $\vhat \in [0.05,55.5]$. Then the functions $\sqrt{\frac{\log(1/\delta)}{n}}\psi_\mu(X,\muhat,\alphahat)$ and $\psi_\alpha(X,\alphahat)$ are Lipschitz with respect to $\vhat$ on the entire interval $\vhat \in [0.05,55.5]$, with Lipschitz constant $c\log\frac{1}{\delta}$ for some universal constant $c$.}

\begin{proof}
Consider the $\vhat$ derivative of $\psi_\alpha(X,\muhat,\alphahat\equiv\frac{\log(1/\delta)}{3n\vhat}) = \sum_{i=1}^n \left(\min\left(\frac{\log(1/\delta)}{3n\vhat} x_i^2,1\right) - \frac{1}{3n}\log\frac{1}{\delta} \right)$.
The $\vhat$ derivative of $\min\left(\frac{\log(1/\delta)}{3n\vhat} x_i^2,1\right)$ is either $-\frac{\log(1/\delta)}{3n\vhat^2} x_i^2=-\frac{1}{\vhat}\alphahat x_i^2$ or 0, depending on which term in the min is the smallest, and in either case has magnitude at most $\frac{1}{\vhat}\min(\alphahat x_i^2,1)$. Thus the overall $\vhat$ derivative of $\psi_\alpha(X,\muhat,\alphahat)$ has magnitude at most $\frac{1}{\vhat}\sum_i \min(\alphahat x_i^2,1)$. Since, we are guaranteed that $\sum_{i=1}^n \min\left(\alphahat x_i^2,1\right) = \frac{1}{3}\log\frac{1}{\delta}$ for some $\vhat\in [0.05, 55.5]$, we thus have that the derivative is within a constant factor of this across the entire range, as desired.

Similarly, consider the $\vhat$ derivative of $\psi_\mu(X,\muhat,\alphahat) = \sum_{i=1}^n \left(\muhat - x_i\left(1-\min\left(\alphahat x_i^2,1\right)\right)\right)$. The $i^\textrm{th}$ term of this is the $\vhat$ derivative of $\min(\alphahat x_i^3,x_i)$, which is either $-\frac{1}{\vhat}\alphahat x_i^3$ or 0 depending on whether $x_i\leq\sqrt{1/\alphahat}$, and thus the magnitude of this derivative may be bounded by $\frac{1}{\vhat\sqrt{\alphahat}}\sum_{i=1}^n \min\left(\alphahat x_i^2,1\right)$. Since $\sum_{i=1}^n \min\left(\alphahat x_i^2,1\right)$ is bounded by a constant times $\log\frac{1}{\delta}$ (as in the last paragraph), and $\frac{1}{\vhat\sqrt{\alphahat}}$ is bounded by a constant times $\frac{1}{\sqrt{\vhat\alphahat}}=\sqrt{\frac{3n}{\log(1/\delta)}}$, the magnitude of the derivative of $\sqrt{\frac{\log(1/\delta)}{n}}\psi_\mu(X,\muhat,\alphahat)$ is bounded by a constant times $\log\frac{1}{\delta}$, as desired.
\end{proof}

\newpage
\bibliography{mean}
\bibliographystyle{plain}

\newpage
\appendix

\section{Additional ``$3^\textrm{rd}$ Order" Motivation for Our Estimator}
\label{app:3rd}

In this appendix, we give additional motivation of our estimator as a ``$3^\textrm{rd}$ order correction" to the sample mean.

Suppose (for this section only), as in~\cite{Catoni:2012}, that one knows the variance $\sigma^2(D)$ of the distribution in question, or has a good estimate of it. 


\begin{example}\label{ex:empirical-correction}
Given samples $x_1,\ldots,x_n$ from a distribution of mean 0 and variance 1 and bounded higher moments, suppose our goal is to construct a slight variant of the empirical mean that will robustly return an estimate that is close to 0, the true mean; we consider estimates of the form $\frac{1}{n}\sum_{i=1}^n (x_i + c(x_i))$ for some function $c:\mathbb{R}\rightarrow\mathbb{R}$. Explicitly, given a bound $b$, we want our estimate to be between $\pm b$, with as high probability as possible. For simplicity we will consider the positive case, namely, bounding $\Pr_{x_1,\ldots,x_n}(\frac{1}{n}\sum_i (x_i+c(x_i)) \geq b)$. With a view towards deriving a Chernoff bound, we rearrange, multiply by an arbitrary positive constant $\alpha$, and exponente inside the probability to yield that this probability equals $\Pr_{x_1,\ldots,x_n}(\exp(\alpha\sum_i (x_i+c(x_i)-b)) \geq 1)$; by Markov's inequality, this probability is at most $\Exp_{x_1,\ldots,x_n}(\exp(\alpha\sum_i (x_i+c(x_i)-b)))$, for our choice of $\alpha>0$. We set $\alpha=b$. Since each $x_i$ is independent, this probability becomes $\Exp_{x}(\exp(b (x+c(x)-b)))^n$.

Considering the empirical estimator, where $c(x_i)=0$, we thus have that the probability the empirical mean estimate exceeds $b$ is at most the $n^{\textrm{th}}$ power of $\Exp_{x}(\exp(b x-b^2))$, where this expression can be expanded to $3^\textrm{rd}$ order as \[e^{-b^2}\left(1+b \Exp(x) +\frac{1}{2}b^2\Exp(x^2)+\frac{1}{6}b^3\Exp(x^3)+O(x^4)\right)\]
As we assumed the data distribution has mean 0 and variance 1, we can simplify the above expression to \[e^{-b^2}\left(1 +\frac{1}{2}b^2+\frac{1}{6}b^3\Exp(x^3)+O(b^4)\right)\]

Ignoring, for the moment, the $3^\textrm{rd}$ or higher-order terms, this expression is $e^{-b^2}(1 +\frac{1}{2}b^2)\approx e^{-b^2/2}$, whose $n^{\textrm{th}}$ power equals $e^{-b^2n/2}$, which is exactly the bound one would expect for the standard \emph{Gaussian} case, of the probability that the empirical mean of $n$ samples is more than $b$ from the true value. However, the $3^\textrm{rd}$ order term is a crucial obstacle here, as the third moment $\Exp(x^3)$ could be of either sign, skewing either the left tail or right tail to have substantially more mass than in our benchmark of the Gaussian case.

We thus choose a correction function $c(x_i)$ so as to cancel out this $3^\textrm{rd}$-order term and improve the estimate in this regime: to cancel out the term $\frac{1}{6}b^3\Exp(x^3)$ in the $3^\textrm{rd}$-order expansion of our Chernoff bound $\Exp_{x}(\exp(b (x-b)))$, we replace $x$ by $x-\frac{1}{6}x^3b^2$, yielding a bound on the failure probability of the $n^\textrm{th}$ power of \[e^{-b^2}\left(1 +\frac{1}{2}b^2+O(b^4)\right)=e^{-b^2/2+O(b^4)}\] as desired.

For the sake of clarity, we can change variables, letting the leading term of our probability bound $e^{-b^2n/2}$ equal $\delta$, and thus the correction $-\frac{1}{6}x^3 b^2$ becomes $c(x)\equiv-\frac{1}{n}x^3\frac{1}{3}\log\frac{1}{\delta}$, meaning the correction amounts essentially to a $3^\textrm{rd}$ moment correction, split $n$ ways and scaled by the same $\frac{1}{3}\log\frac{1}{\delta}$ of our main algorithm.

We explicitly relate this estimator to Estimator~\ref{alg:merged} by pointing out that, when none of the samples $x_i$ are ``truncated" by Estimator~\ref{alg:merged} (namely, $\alphahat x_i^2\leq 1$ always),  then $\alphahat\equiv\frac{\log(1/\delta)}{3n\vhat}$ may be expressed in terms of the empirical variance $\vhat$; taking $\kappa=0$ for simplicity, the returned estimate will be $\frac{1}{n}\sum_i x_i-\alpha x_i^3 = \frac{1}{n}\sum_i x_i-\frac{1}{\vhat n} x_i^3 \frac{1}{3}\log\frac{1}{\delta}$, which equals the above-derived ``$3^\textrm{rd}$-order corrected estimator" when the empirical variance is the true variance, 1.
\end{example}

In the above example we showed that Chernoff bounds for the empirical mean deteriorate for distributions with large $3^\textrm{rd}$ moments (skew), and that adding a $3^\textrm{rd}$-order correction to the empirical mean corrects for this, leaving essentially ``Gaussian-like" performance. These calculations motivate several features of Estimator~\ref{alg:merged}---including the $\frac{1}{3}\log\frac{1}{\delta}$ parameter, and the $3^\textrm{rd}$-order terms in the expression for $\muhat$---even though the overall form of Estimator~\ref{alg:merged} is rather different, as it must work in all regimes and not just in the cartoon asymptotic regime considered in this example.

\section{Proposition~\ref{prop:main} implies Theorem~\ref{thm:main}}\label{app:basic}
For completeness' sake, we explicitly state and prove the properties described in Section~\ref{sec:basic}, which combine to show that Proposition~\ref{prop:main} implies Theorem~\ref{thm:main}.

\begin{lemma}
\label{lem:unit}
Suppose $X$ is a set of samples in $\Real$.
Then for any $\delta > 0$ and any scale $a>0$ and shift $b$,
$$ \muhat(aX+b,\delta) = a\,\muhat(X,\delta) + b $$
where $\muhat$ denotes the output of Estimator~\ref{alg:merged}.
\end{lemma}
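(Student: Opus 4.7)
The plan is to track the effect of the affine substitution $x_i \mapsto a x_i + b$ through each of the three steps of Estimator~\ref{alg:merged}, and show that it propagates in a structured way: $\kappa$ transforms affinely, $\alpha$ transforms by a reciprocal square of the scale, and these combine to give the claimed affine transformation of $\muhat$.

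First, I would verify that Step~1 is affine equivariant. Each group mean is a linear function of the samples in that group, so under $x_i \mapsto a x_i + b$ each group mean transforms as $\bar x_j \mapsto a \bar x_j + b$. Since $a > 0$, the ordering of the group means is preserved, so the median of the transformed group means equals $a \kappa + b$. Thus $\kappa \mapsto a\kappa + b$.

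Next, for Step~2, observe that after the substitution, the deviations from the new center satisfy $(a x_i + b) - (a\kappa + b) = a(x_i - \kappa)$, and hence $((ax_i+b) - (a\kappa+b))^2 = a^2(x_i - \kappa)^2$. The defining equation for the new scale parameter $\alpha'$ becomes $\sum_i \min(\alpha' a^2 (x_i - \kappa)^2, 1) = \tfrac{1}{3}\log\tfrac{1}{\delta}$, which is identical to the original equation under the substitution $\alpha' a^2 = \alpha$. Since Step~2 solves a strictly monotonic piecewise-linear equation with a unique positive solution, we conclude $\alpha' = \alpha/a^2$.

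Finally, for Step~3, I substitute the transformed quantities into the output expression: the new output is
\[
(a\kappa + b) + \frac{1}{n}\sum_i a(x_i - \kappa)\bigl(1 - \min(\tfrac{\alpha}{a^2} \cdot a^2 (x_i - \kappa)^2,\,1)\bigr) = a\kappa + b + a\cdot\frac{1}{n}\sum_i (x_i - \kappa)\bigl(1 - \min(\alpha(x_i-\kappa)^2,1)\bigr),
\]
which equals $a\muhat(X,\delta) + b$, as desired. There is no real obstacle here; the only subtlety is ensuring the median in Step~1 is well-defined (using a fixed tie-breaking rule consistent with affine transformations) and that $a>0$ is needed so that the sign of deviations, and hence the ordering in the median, is preserved—note that negative $a$ would reverse the ordering and the statement would still hold but require a sign tracking argument.
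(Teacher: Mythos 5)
Your proof is correct and follows the same route as the paper, which justifies the lemma in one sentence by noting that the median-of-means is affine equivariant and that $\alpha$ is chosen so that $\min(\alpha(x_i-\kappa)^2,1)$ is invariant under the affine map; your write-up simply makes explicit the correspondence $\kappa \mapsto a\kappa+b$, $\alpha \mapsto \alpha/a^2$ and propagates it through Step 3.
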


The above lemma follows trivially from the fact that the median-of-means estimate also respects shift and scale in the input samples, and that $\alpha$ is chosen in Step 2 of Estimator~\ref{alg:merged} so that $\min(\alpha (x_i - \kappa)^2, 1)$ does not depend on the affine parameters $a,b$.

\begin{fact}[\cite{Hsu:2016}]
\label{fact:MoM}
For any distribution $D$ with mean $\mu$ and standard deviation $\sigma$, the median-of-means estimate $\kappa$, on input $n$ samples, satisfies
$$ \Pr\left(|\kappa-\mu| > O\left(\sigma\sqrt{\frac{\log\frac{1}{\delta}}{n}}\right)\right) \le \delta $$
\end{fact}

\begin{lemma}
\label{lem:kappa}
Consider a fixed sample set $X$ of size $n$, and a confidence parameter $\delta$.
Let $e(X,\delta,\kappa)$ denote Estimator~\ref{alg:merged} but where Step 1 is omitted and $\kappa$ is instead considered as an input.
Then,
$$ \left|\frac{\d\,e(X,\delta,\kappa)}{\d\,\kappa}\right| = O\left(\sqrt{\frac{\log\frac{1}{\delta}}{n}}\right) $$
\end{lemma}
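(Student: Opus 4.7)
The plan is to compute $\frac{de}{d\kappa}$ explicitly via the chain rule---noting that $\alpha$ depends on $\kappa$ implicitly through the defining equation $\sum_i \min(\alpha(x_i-\kappa)^2,1) = \frac{1}{3}\log\frac{1}{\delta}$---and then bound each resulting piece. Write $w_i := \min(\alpha(x_i-\kappa)^2,1)$ and let $S := \{i : \alpha(x_i-\kappa)^2 < 1\}$ denote the untruncated indices. The pointwise kinks at $\alpha(x_i-\kappa)^2 = 1$ occur on a measure-zero set in $\kappa$, and elsewhere both $w_i$ and $\alpha$ are smooth in $\kappa$; bounding $|de/d\kappa|$ off this set is enough to yield the stated Lipschitz bound globally.

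Applying the chain rule to $e(X,\delta,\kappa) = \kappa + \frac{1}{n}\sum_i (x_i-\kappa)(1-w_i)$ and using the constraint $\sum_i w_i = \frac{1}{3}\log\frac{1}{\delta}$ to simplify, $\frac{de}{d\kappa}$ splits into three contributions: a constant term $\frac{1}{3n}\log\frac{1}{\delta}$ from the explicit outer $\kappa$; a term $\frac{2\alpha}{n}\sum_{i\in S}(x_i-\kappa)^2$ from the explicit $\kappa$ inside $w_i$; and a term $-\frac{d\alpha}{d\kappa}\cdot\frac{1}{n}\sum_{i\in S}(x_i-\kappa)^3$ from the implicit dependence. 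The first two are easy: using $\alpha\sum_{i\in S}(x_i-\kappa)^2 = \sum_{i\in S}w_i \le \frac{1}{3}\log\frac{1}{\delta}$, both are $O(\log(1/\delta)/n)$, which is $o(\sqrt{\log(1/\delta)/n})$.

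The main obstacle is the third term, because a priori $\frac{d\alpha}{d\kappa}$ has no obvious size bound. Implicit differentiation of the constraint yields the closed form $\frac{d\alpha}{d\kappa} = \frac{2\alpha\sum_{i\in S}(x_i-\kappa)}{\sum_{i\in S}(x_i-\kappa)^2}$. After substituting, the denominator $\sum_{i\in S}(x_i-\kappa)^2$ cancels against the numerator once I apply $\left|\sum_{i\in S}(x_i-\kappa)^3\right| \le \frac{1}{\sqrt{\alpha}}\sum_{i\in S}(x_i-\kappa)^2$ (which follows from $|x_i-\kappa| \le 1/\sqrt{\alpha}$ for $i \in S$), leaving the clean bound $\frac{2\sqrt{\alpha}}{n}\left|\sum_{i\in S}(x_i-\kappa)\right|$. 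From there Cauchy--Schwarz gives $\left|\sum_{i\in S}(x_i-\kappa)\right| \le \sqrt{|S|\sum_{i\in S}(x_i-\kappa)^2}\le \sqrt{n\sum_{i\in S}(x_i-\kappa)^2}$, and invoking the global budget $\alpha\sum_{i\in S}(x_i-\kappa)^2 \le \frac{1}{3}\log\frac{1}{\delta}$ one last time yields the final $O(\sqrt{\log(1/\delta)/n})$ bound. The elegance is that all of the potentially large quantities cancel in pairs, leaving only the total truncated weight $\frac{1}{3}\log\frac{1}{\delta}$ as the driving quantity.
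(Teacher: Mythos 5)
Your proof is correct and follows essentially the same route as the paper's: implicit differentiation of the constraint $\sum_i \min(\alpha(x_i-\kappa)^2,1)=\frac{1}{3}\log\frac{1}{\delta}$ to get $\frac{d\alpha}{d\kappa}$, the pointwise bound $|x_i-\kappa|\le 1/\sqrt{\alpha}$ on the untruncated set to control the third-moment sum, Cauchy--Schwarz on $\sum_{i\in S}(x_i-\kappa)$, and the total truncation budget $\alpha\sum_{i\in S}(x_i-\kappa)^2\le\frac{1}{3}\log\frac{1}{\delta}$ to close the argument. The only (cosmetic) difference is that you regroup the explicit-$\kappa$ terms via $\sum_i w_i=\frac{1}{3}\log\frac{1}{\delta}$ rather than counting the truncated indices separately.
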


Fact~\ref{fact:MoM} shows that, except with $\delta$ probability, the median-of-means estimate is within $O\left(\sigma\sqrt{\frac{\log\frac{1}{\delta}}{n}}\right)$ of the true mean, and multiplying this by the Lipschitz constant $O\left(\sqrt{\frac{\log\frac{1}{\delta}}{n}}\right)$ from Lemma~\ref{lem:kappa} shows that the change in output of Algorithm~\ref{alg:merged}, between using the median-of-means versus setting $\kappa=0$, has magnitude $O\left(\sigma\sqrt{\frac{\log\frac{1}{\delta}}{n}}^2\right) = o\left(\sigma\sqrt{\frac{\log\frac{1}{\delta}}{n}}\right)$.
This discrepancy is therefore a $o(1)$ fraction of the additive error guaranteed by Theorem~\ref{thm:main}.

We now prove Lemma~\ref{lem:kappa}.

\begin{proof}
We compute the derivatives with respect to $\alpha$ and $\kappa$ of the $\muhat$ (computed in Step 3 of Estimator~\ref{alg:merged}), and the expression on the left hand side of Step 2, which we denote $\nu\equiv\sum_i \min(\alpha (x_i-\kappa)^2,1)$. We note that for terms where $\min(\alpha(x_i-\kappa)^2,1)=1$, all derivatives are 0, so we adopt the notation ``$\Sigma_<$" to denote summing only over those indices $i$ for which $\alpha (x_i-\kappa)^2<1$. Thus we have 
\begin{align*}
\frac{d\nu}{d\kappa}&=2\sum_< \alpha(x_i-\kappa)\\
\frac{d\nu}{d\alpha}&=\sum_< (x_i-\kappa)^2\\
\frac{d\muhat}{d\kappa}&=1+\frac{1}{n}\sum_< (-1+3\alpha(x_i-\kappa)^2)\\
\frac{d\muhat}{d\alpha}&=-\frac{1}{n}\sum_< (x_i-\kappa)^3
\end{align*}

Recall that $\alpha$ is defined implicitly so as to make the expression $\nu=\frac{1}{3}\log\frac{1}{\delta}$; thus in Estimator~\ref{alg:merged}, if we change $\kappa$ at a rate of 1, then $\alpha$ also changes at rate $\left.-\frac{d\nu}{d\kappa}\right/\frac{d\nu}{d\alpha}$ to keep $\nu$ unchanged. Thus, the overall derivative of the estimate with respect to changing $\kappa$ equals $\frac{d\muhat}{d\kappa}-\frac{d\muhat}{d\alpha}\left.\frac{d\nu}{d\kappa}\right/\frac{d\nu}{d\alpha}$. We bound this from the derivatives computed above.

To bound $\frac{d\muhat}{d\kappa}$, we note that the number of indices \emph{not} in the sum ``$\sum_<$" is at most $\frac{1}{3}\log\frac{1}{\delta}$ because each such $i$ contributes 1 to the left hand side of the condition in Step 2 of Estimator~\ref{alg:merged} and the right hand side equals $\frac{1}{3}\log\frac{1}{\delta}$. Thus the initial terms of $\frac{d\muhat}{d\kappa}$ are bounded as $1+\frac{1}{n}\sum_<(-1)\leq \frac{1}{3n}\log\frac{1}{\delta}$. The remaining part of $\frac{d\muhat}{d\kappa}$, namely $\frac{1}{n}\sum_< 3\alpha(x_i-\kappa)^2$ is $\frac{3}{n}$ times the corresponding terms in $\nu\leq\frac{1}{3}\log\frac{1}{\delta}$ itself, and thus is at most $\frac{1}{n}\log\frac{1}{\delta}$. Thus $\frac{d\muhat}{d\kappa}=O(\frac{1}{n}\log\frac{1}{\delta})$.

We now bound the remaining term $-\frac{d\muhat}{d\alpha}\left.\frac{d\nu}{d\kappa}\right/\frac{d\nu}{d\alpha}$. Since for each index $i$ in ``$\sum_<$" we have $|x_i-\kappa|\leq\frac{1}{\sqrt{\alpha}}$, we may bound $\frac{d\muhat}{d\alpha}$, involving a $3^\textrm{rd}$ moment term, by the simpler $|\frac{d\muhat}{d\alpha}|\leq \frac{1}{n}\sum_< |x_i-\kappa|^2/\sqrt{\alpha}$. Combining this, with the other derivatives and the bound $\alpha\leq \frac{1}{3}\left.\log\frac{1}{\delta}\right/\sum_< (x_i-\kappa)^2$ from the previous paragraph yields: \[\left|\frac{d\muhat}{d\alpha}\left.\frac{d\nu}{d\kappa}\right/\frac{d\nu}{d\alpha}\right|\leq\frac{2\sqrt{\alpha}}{n}\left|\frac{\sum_< (x_i-\kappa)\sum_< (x_i-\kappa)^2}{\sum_< (x_i-\kappa)^2}\right|\leq\frac{2\sqrt{\frac{1}{3}\log\frac{1}{\delta}}}{n}\left|\frac{\sum_< (x_i-\kappa)}{\sqrt{\sum_< (x_i-\kappa)^2}}\right|\leq \sqrt{\frac{4\log\frac{1}{\delta}}{3n}}\]
where the last inequality is Cauchy-Schwarz applied to the sequence $(x_i-\kappa)$ and the all-1s sequence.
\end{proof}

\end{document}